\newtheorem{theorem}{Theorem}[section]
\newtheorem{lemma}[theorem]{Lemma}
\newtheorem{corollary}[theorem]{Corollary}
\newtheorem{proposition}[theorem]{Proposition}
\theoremstyle{definition}
\newtheorem{definition}[theorem]{Definition}
\newtheorem*{definition*}{Definition}
\theoremstyle{remark}
\newtheorem*{remark}{Remark}
\numberwithin{equation}{section}
\newcommand {\Z}{\mathbb{Z}} 
\newcommand{\FF}{\mathcal{F}}
\newcommand{\PP}{\mathcal{P}}
\newcommand{\GG}{\mathcal{G}}
\DeclareMathOperator{\ent}{ent}
\begin{document}
\title[The Myhill property for strongly irreducible subshifts]{The Myhill property for strongly irreducible subshifts over amenable groups}

\author{Tullio Ceccherini-Silberstein}
\address{Dipartimento di Ingegneria, Universit\`a del Sannio, C.so
Garibaldi 107, 82100 Benevento, Italy}
\email{tceccher@mat.uniroma1.it}
\author{Michel Coornaert}
\address{Institut de Recherche Math\'ematique Avanc\'ee, UMR 7501,                           
Universit\'e  de Strasbourg et CNRS, 7 rue Ren\'e-Descartes,
67000 Strasbourg, France}
\email{coornaert@math.unistra.fr}
\subjclass[2000]{37B10, 37B15, 68Q80, 43A07}
\keywords{Shift, subshift, cellular automaton, Myhill property, strongly irreducible subshift,  topologically mixing subshift, amenable group, entropy}
\date{September 3rd, 2010}
\begin{abstract}
Let $G$ be an amenable group and let $A$ be a finite set.
We prove that if $X \subset A^G$ is a strongly irreducible subshift 
then $X$ has the Myhill property, that is, every pre-injective cellular automaton $\tau \colon X \to X$ is surjective.
      \end{abstract}

\maketitle

\section{Introduction}
\label{s:introduction}

Let $G$ be a group and let $A$ be a finite set.
We equip  the set $A^G = \prod_{g \in G} A =  \{x \colon G \to A\}$  with its \emph{prodiscrete} topology, that is, with the product topology obtained by taking the discrete topology on each factor $A$ of $A^G$.
The elements of $A^G$ are called the \emph{configurations} over the group $G$ and the 
\emph{alphabet} $A$.
The $G$-\emph{shift} on $A^G$ is the continuous left action of $G$ on $A^G$ defined by
$gx(h) = x(g^{-1}h)$ for all $g,h \in G$ and $x \in A^G$.
 A closed $G$-invariant subset of $A^G$ is called a \emph{subshift}.
 The set $A^G$ is a subshift of itself which is traditionally referred to  as the \emph{full shift}.
 \par
  A subshift $X \subset A^G$ is called \emph{strongly irreducible} if there is a finite subset 
 $\Delta \subset G$ satisfying the following property:
 if $\Omega_1$ and $\Omega_2$   are finite subsets of $G$ such that there exists no element $g \in \Omega_2$ such that $g \Delta$ meets  $\Omega_1$,  then, given any two configurations $x_1,x_2 \in X$, there exists  a configuration $x \in X$ which coincides with $x_1$ on $\Omega_1$ and with $x_2$ on $\Omega_2$.
   A subshift $X \subset A^G$ is said to be \emph{of finite type} if there exists a finite subset $D \subset G$ and a subset $P \subset A^D$ such that $X$ consists of all the configurations 
   $x \in A^G$  such that   the restriction of $g^{-1}x$ to $D$ belongs to $P$ for all $g \in G$.
   Such a subset $D$ is then called a \emph{defining window} for $X$.  
   \par
A
 \emph{cellular automaton} on a subshift $X \subset A^G$ is a map $\tau \colon X \to X$ which is continuous (for the prodiscrete topology) and commutes with the $G$-shift (i.e., such that 
$\tau(gx) = g\tau(x)$ for all $g \in G$ and $x \in X$).
A cellular automaton $\tau \colon X \to X$ on a subshift $X \subset A^G$ is called \emph{pre-injective}
if the equality $\tau(x_1) = \tau(x_2)$ implies $x_1 = x_2$ whenever the configurations $x_1,x_2 \in X$ coincide outside of a finite subset of $G$.
Every injective cellular automaton is pre-injective but 
there are pre-injective cellular automata which are not injective.
For example, by taking $G = \Z$ and $A = \Z/2\Z$,
the map $\tau \colon A^G \to A^G$ defined by $\tau(x)(n) = x(n) + x(n + 1)$ is a cellular automaton which is pre-injective but not injective.
\par
The \emph{Garden of Eden theorem}
says that if $\tau \colon A^G \to A^G$ is a cellular automaton on the full shift $A^G$, where $G$ is an amenable group and $A$ is a finite set, then $\tau$ is surjective if and only if it is pre-injective
(see Subsection \ref{ss:amenable-groups} for the definition of amenable groups).
It was first established in the special case $G = \Z^2$ by  E.F. Moore 
 \cite{moore} who proved the implication ``surjective $  \Rightarrow$ pre-injective" and by  J. Myhill 
 \cite{myhill} who proved the converse implication.
  The Garden of Eden theorem was subsequently extended to all finitely generated amenable groups in \cite{ceccherini} (see \cite{induction} for general amenable groups).   
\par
One says that a subshift $X \subset A^G$ has the \emph{Moore property} if every surjective cellular automaton $\tau \colon X \to X$ is pre-injective and that it has the \emph{Myhill property} if every pre-injective cellular automaton $\tau \colon X \to X$ is surjective.
 In \cite{fiorenzi-strongly} F. Fiorenzi proved a Garden of Eden theorem for strongly irreducible subshifts of finite type $X \subset A^G$, where $G$ is a finitely generated  amenable group and $A$ is a finite set.
In other words, such subshifts have both the Moore and the Myhill property.
\par
The \emph{even subshift} is the subshift $X \subset \{0,1\}^\Z$
formed by  all bi-infinite sequences of $0$s and $1$s in which every chain of $0$s which is bounded by two $1$s has even length.
In \cite[Section 3]{fiorenzi-sofic}, Fiorenzi gave an example of a cellular automaton over the even subshift 
 which is surjective but not pre-injective.
  As the even subshift is strongly irreducible and $\Z$ is amenable, this shows that a strongly irreducible subshift 
 $X \subset A^G$, with $G$ amenable and $A$ finite,
 may fail to have the Moore property.
However, it turns out that  strongly irreducible  subshifts over amenable groups and finite alphabets have always the Myhill property (even if its is not of finite type).
This is the main result of the present paper:

\begin{theorem}
\label{t:main-theorem}
Let $G$ be a (possibly uncountable) amenable group and let $A$ be a finite set.
Then every strongly irreducible subshift $X \subset A^G$
  has the Myhill property.
  \end{theorem}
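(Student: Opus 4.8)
The plan is to adapt the entropy-counting argument behind Myhill's half of the Garden of Eden theorem --- the implication ``pre-injective $\Rightarrow$ surjective'' --- which, in contrast to Moore's half, requires only strong irreducibility and not finiteness of type; this is exactly why the even subshift can have the Myhill property while lacking the Moore property. Throughout, $\ent(Z)$ denotes the topological entropy of a subshift $Z \subseteq A^G$, computed along a F{\o}lner net $(F_j)_{j \in J}$ of the amenable group $G$ as $\ent(Z) = \lim_j \frac{\log|Z_{F_j}|}{|F_j|}$, where $Z_F$ is the set of restrictions to a finite $F \subseteq G$ of the configurations in $Z$; the limit exists and is net-independent by the Ornstein--Weiss lemma, and $Z \subseteq Z'$ clearly gives $\ent(Z) \le \ent(Z')$. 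Let $\tau \colon X \to X$ be a pre-injective cellular automaton on the strongly irreducible subshift $X$. By the Curtis--Hedlund--Lyndon theorem (valid over an arbitrary group, since $X$ is compact) $\tau$ has a finite memory set $S \subseteq G$; fix also a finite strong irreducibility constant $\Delta \subseteq G$ for $X$. Because $X$ is compact and $\tau$ is continuous and shift-commuting, $\tau(X)$ is a closed shift-invariant subset of $X$, i.e.\ a subshift, so $\ent(\tau(X)) \le \ent(X)$; the plan is to show that pre-injectivity forces $\ent(\tau(X)) \ge \ent(X)$ as well, and then to invoke entropy-minimality of $X$ to deduce $\tau(X) = X$.

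The core step is the entropy estimate for pre-injective $\tau$. Fix $x_0 \in X$, and for a F{\o}lner set $F = F_j$ let $V_F \subseteq X$ be the finite set of configurations that agree with $x_0$ on $G \setminus F$. Distinct elements of $V_F$ differ only inside the finite set $F$, so pre-injectivity makes $\tau$ injective on $V_F$; moreover, since $\tau$ has memory set $S$, every $\tau(x)$ with $x \in V_F$ agrees with $\tau(x_0)$ outside a fixed finite enlargement $F^{+}$ of $F$ by $S$, whence $x \mapsto \tau(x)|_{F^{+}}$ is injective on $V_F$ and $|V_F| \le |\tau(X)_{F^{+}}|$. Conversely, let $F^{-}$ be a $\Delta$-erosion of $F$, chosen so that $F^{-}$ and $G \setminus F$ satisfy the separation hypothesis in the definition of strong irreducibility; that hypothesis --- extended from finite to arbitrary subsets by a routine compactness argument, $X$ being closed --- lets one amalgamate any pattern in $X_{F^{-}}$ with the restriction of $x_0$ to $G \setminus F$ into a configuration of $X$, necessarily lying in $V_F$, so $V_F \to X_{F^{-}}$ is onto and $|V_F| \ge |X_{F^{-}}|$. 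Hence $|X_{F^{-}}| \le |\tau(X)_{F^{+}}|$; taking $\log$, dividing by $|F|$, and letting $j$ run over the net --- using that $|F^{+}|/|F| \to 1$ and $|F^{-}|/|F| \to 1$ by amenability, so the entropies of $X$ and $\tau(X)$ are read off the dilated and eroded nets as well --- gives $\ent(X) \le \ent(\tau(X))$. With the reverse inequality this forces $\ent(\tau(X)) = \ent(X)$, and then the fact that a strongly irreducible subshift over an amenable group is \emph{entropy-minimal}, i.e.\ that every proper subshift of it has strictly smaller entropy, forces $\tau(X) = X$: $\tau$ is surjective.

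The step I expect to be the genuine obstacle is this entropy-minimality, which I would treat as a known property of strongly irreducible subshifts over amenable groups. It resists the naive device of tiling a F{\o}lner set with $\Delta$-separated translates of a window on which the proper subshift $Y \subsetneq X$ omits a pattern: the buffer zones that must be left between separated copies so that strong irreducibility can be applied cost, per copy, exactly as much entropy as the omitted pattern saves, and the resulting inequality degenerates to the tautology $\ent(Y) \le \ent(Y)$. One way to establish it is measure-theoretic: by the variational principle for amenable group actions, $\ent(Y)$ is the supremum of the measure entropies $h_\nu$ over shift-invariant Borel probability measures $\nu$ on $Y$, and this supremum is attained because the entropy functional is upper semicontinuous on the weak-$*$ compact set of invariant measures on the expansive system $Y$; if $\ent(Y) = \ent(X)$, the maximizer $\nu$ is then a measure of maximal entropy of $X$, and since every measure of maximal entropy of a strongly irreducible subshift is fully supported, the closed set $Y$ contains the support of $\nu$, which is all of $X$ --- a contradiction. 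Finally, the case of uncountable $G$ needs no extra work: all data involved ($A$, $S$, $\Delta$, and each finite $F$) are finite, so fixing a F{\o}lner net of $G$ lets the argument run verbatim; alternatively one may reduce to countable $G$ at the outset via the restriction formalism for cellular automata over subgroups. Assembling the pieces, pre-injectivity gives $\ent(\tau(X)) = \ent(X)$ by the counting step, and entropy-minimality of the strongly irreducible $X$ then gives $\tau(X) = X$.
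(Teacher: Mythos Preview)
Your proposal is correct and follows the same two-step strategy as the paper: first show that pre-injectivity forces $\ent(\tau(X)) = \ent(X)$ via the counting argument on the sets $V_F$ (this is the paper's Theorem~\ref{t:pre-inj-si-source}, argued there by contradiction but resting on the same inequality $|X_{F^-}| \le |V_F| \le |\tau(X)_{F^+}|$, with the extension of strong irreducibility to the infinite complement $G\setminus F$ supplied by the paper's Lemma~\ref{l:strongly-implies-stab}), and then invoke entropy-minimality of strongly irreducible subshifts (the paper's Proposition~\ref{p:entropy-increasing}) to conclude $\tau(X) = X$.

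The one substantive difference is how entropy-minimality is obtained. You dismiss the direct tiling approach as degenerating to a tautology and reach instead for the variational principle together with full support of measures of maximal entropy. In fact the paper proves entropy-minimality by precisely the tiling device you reject: Lemma~\ref{l:ent-Z-smaller-ent-X} fixes an $(E,E')$-tiling $T$ with $D^{+\Delta} \subset E$, sets $\rho = |X_E|$, and shows by induction over the tiles $g_1,\dots,g_m$ meeting $F_j$ that each constraint $Z_{g_iD} \subsetneqq X_{g_iD}$ cuts the number of admissible $F_j$-patterns by a factor at most $1 - \rho^{-1}$, yielding $|Z_{F_j}| \le (1-\rho^{-1})^{|T_j|}|X_{F_j}|$ and hence $\ent_\FF(Z) \le \ent_\FF(X) + \alpha\log(1-\rho^{-1}) < \ent_\FF(X)$. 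What your heuristic misses is that the buffer cost is absorbed once into the fixed constant $\rho$ rather than paid afresh at every tile against the saving. Your measure-theoretic alternative is heavier and risks circularity: the usual way to see that every measure of maximal entropy on a strongly irreducible subshift is fully supported is to note that its support is a subshift, necessarily of strictly smaller entropy if proper --- which is entropy-minimality again.
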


The proof of Theorem \ref{t:main-theorem} relies on the entropic properties of strongly irreducible subshifts over amenable groups. 
More precisely, the first step consists in showing that  if $\tau \colon X \to X$ is a pre-injective cellular automaton over a strongly irreducible subshift $X \subset A^G$, with $G$ amenable and $A$ finite,
then the entropy of the subshift $\tau(X)$ is equal to that of $X$ (see Theorem \ref{t:pre-inj-si-source}). We then conclude that 
$\tau(X) = X$ by using the fact that if $Y$ is any proper subshift of $X$ then the entropy of $Y$ is  strictly smaller than the entropy  of $X$ (Proposition \ref{p:entropy-increasing}).
\par
When $G = \Z$ whe have the following characterization of strongly irreducible subshifts
(see Section \ref{sec:strongly-over-Z} for the definition of the language $L(X)$ associated with a subshift $X$):

\begin{proposition}
\label{p;equivalences-strong-irreducibility}
Let $A$ be a finite set and let $X \subset A^\Z$ be a subshift. 
Then the following conditions are equivalent:
\begin{enumerate}[{\rm (a)}]
\item 
$X$ is strongly irreducible;
\item 
there is an integer $N_0 \geq 0$ such that, for all $u,v \in L(X)$ and for every $N \geq N_0$, 
there exists a word $w \in A^*$ of length $N$ satisfying $uwv \in L(X)$.
\end{enumerate}
\end{proposition}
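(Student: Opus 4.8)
The plan is to prove the two implications separately, exploiting in the non-trivial direction the one-dimensionality of $\Z$. For (a) $\Rightarrow$ (b), let $\Delta \subset \Z$ be a finite set witnessing strong irreducibility and pick $M \geq 0$ with $\Delta \subseteq \{-M, \dots, M\}$; I would show that $N_0 = M$ works. Given $u, v \in L(X)$ and $N \geq N_0$, after suitable translations we may choose $x_1, x_2 \in X$ with $x_1$ equal to $u$ on $\Omega_1 := \{-|u|, \dots, -1\}$ and $x_2$ equal to $v$ on $\Omega_2 := \{N, \dots, N + |v| - 1\}$. Since $g + \Delta \subseteq \{N - M, \dots, N + |v| - 1 + M\} \subseteq \{0, 1, 2, \dots\}$ for every $g \in \Omega_2$, the pair $(\Omega_1, \Omega_2)$ satisfies the separation hypothesis of strong irreducibility, so some $x \in X$ agrees with $x_1$ on $\Omega_1$ and with $x_2$ on $\Omega_2$. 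Then $w := x|_{\{0, \dots, N-1\}}$ has length $N$ and $uwv = x|_{\{-|u|, \dots, N + |v| - 1\}} \in L(X)$.

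For (b) $\Rightarrow$ (a), I claim that $\Delta := \{-N_0, \dots, N_0\}$ witnesses strong irreducibility. Let $\Omega_1, \Omega_2 \subset \Z$ be finite with $g + \Delta$ disjoint from $\Omega_1$ for all $g \in \Omega_2$; since $0 \in \Delta$ this forces $\Omega_1 \cap \Omega_2 = \emptyset$ and, crucially, $|g_1 - g_2| > N_0$ for all $g_1 \in \Omega_1$, $g_2 \in \Omega_2$. If some $\Omega_i$ is empty we take $x = x_1$ or $x = x_2$; otherwise list the points of $\Omega := \Omega_1 \cup \Omega_2$ as $p_1 < \dots < p_k$ and group them into maximal runs of consecutive $p_i$'s lying in the same $\Omega_\epsilon$. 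Consecutive runs then carry different labels $\epsilon \in \{1,2\}$, and if a run with label $\epsilon$ ends at $p$ and the next (with label $\epsilon' \neq \epsilon$) begins at $p'$, the bound above gives $p' - p - 1 \geq N_0$ free sites in between. Given $x_1, x_2 \in X$, I would assemble a word $W \in L(X)$ occupying $\{p_1, \dots, p_k\}$: on a run with label $\epsilon$ spanning the interval $\{a, \dots, b\}$ place $x_\epsilon|_{\{a, \dots, b\}} \in L(X)$, then fill the successive gaps from left to right by repeatedly invoking (b) --- at each stage the word built so far lies in $L(X)$, the next run's word lies in $L(X)$, and the gap has length $\geq N_0$, so (b) supplies an admissible filler of exactly the required length. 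Since the final $W$ lies in $L(X)$ it occurs as a factor of some configuration of $X$, and translating that configuration so the occurrence sits on $\{p_1, \dots, p_k\}$ gives $x \in X$ agreeing with $x_1$ on $\Omega_1$ and with $x_2$ on $\Omega_2$, as needed.

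The conceptual content is entirely in the direction (b) $\Rightarrow$ (a); (a) $\Rightarrow$ (b) merely unwinds the definitions. I expect the main obstacle to be organizational rather than mathematical: one must set up the run decomposition so that every gap to be filled has length at least $N_0$, verify that the iterated gap-filling keeps the partial words in $L(X)$, and handle the final translation and the degenerate cases (an empty $\Omega_i$, or all of $\Omega$ within a single run, which reduce to taking $x \in \{x_1, x_2\}$).
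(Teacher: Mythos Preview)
Your proposal is correct and follows essentially the same approach as the paper: both directions use $\Delta=\{-N_0,\dots,N_0\}$, and for (b)$\Rightarrow$(a) both decompose $\Omega_1\cup\Omega_2$ into alternating maximal ``runs'', replace each run by the restriction of the appropriate $x_\epsilon$ to its spanning interval, and fill the gaps (each of length $\geq N_0$) via repeated application of (b). The only cosmetic difference is that the paper packages the iterated gap-filling as an explicit auxiliary statement (b$'$) proved by induction before applying it, whereas you do the induction inline.
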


As an application, we have the following (see Section \ref{sec:strongly-over-Z} for the definition of a sofic subshift):

\begin{corollary} 
\label{c:sofic-strongly-irr}
Let $A$ be a finite set and let $X \subset A^\Z$ be a sofic subshift. 
Then $X$ is strongly irreducible if and only if it is topologically mixing.
\end{corollary}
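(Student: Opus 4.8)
The plan is to prove the two implications separately; the implication ``strongly irreducible $\Rightarrow$ topologically mixing'' will be immediate from Proposition~\ref{p;equivalences-strong-irreducibility}, and soficity will be used only for the converse. First I would record the routine reformulation of topological mixing in terms of the language of $X$: unwinding the definition of the mixing property for the $\Z$-action on a subshift $X \subset A^\Z$ in terms of cylinder sets, one checks that $X$ is topologically mixing if and only if, for all $u,v \in L(X)$, there is an integer $N_0(u,v) \geq 0$ such that for every $N \geq N_0(u,v)$ there is a word $w \in A^*$ of length $N$ with $uwv \in L(X)$. Granting this, if $X$ is strongly irreducible then Proposition~\ref{p;equivalences-strong-irreducibility} provides a single integer $N_0$ which works for all pairs $u,v \in L(X)$ simultaneously, so setting $N_0(u,v) := N_0$ shows that $X$ is topologically mixing; note that this direction does not use soficity.

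For the converse, suppose $X$ is a topologically mixing sofic subshift (the case $X = \emptyset$ being trivial). Since $X$ is sofic, I would fix a finite presentation of $X$ by a finite labeled directed graph $\mathcal{G} = (V,E)$ with label map $\lambda \colon E \to A$, and then discard all vertices of $\mathcal{G}$ lying on no bi-infinite path; this leaves $X$ unchanged and makes $\mathcal{G}$ essential, so that $L(X)$ is exactly the set of labels of finite paths in $\mathcal{G}$ and every finite path of $\mathcal{G}$ extends to a bi-infinite one. For $u \in L(X)$ let $T(u) \subset V$ be the nonempty set of terminal vertices of paths labeled $u$, and for $v \in L(X)$ let $I(v) \subset V$ be the nonempty set of initial vertices of paths labeled $v$. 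The key bookkeeping step is the equivalence, valid for all $u,v \in L(X)$ and all $N \geq 0$: there is a word $w \in A^*$ of length $N$ with $uwv \in L(X)$ if and only if there is a path in $\mathcal{G}$ of length $N$ from some vertex of $T(u)$ to some vertex of $I(v)$. One direction follows by splitting a path labeled $uwv$ into three consecutive pieces of lengths $|u|$, $N$, $|v|$; the other by concatenating a path labeled $u$, the given path, and a path labeled $v$, using essentiality.

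Then I would run a finiteness argument. For nonempty $S, S' \subset V$ write $M(S,S')$ for the set of lengths of paths in $\mathcal{G}$ from a vertex of $S$ to a vertex of $S'$; this is a subset of $\N$ depending only on $\mathcal{G}$, $S$, $S'$. The reformulation of topological mixing, combined with the equivalence above, says that $M(T(u),I(v))$ contains all sufficiently large integers for every pair $u,v \in L(X)$. But as $u,v$ range over $L(X)$ the pair $(T(u),I(v))$ takes only finitely many values, since $V$ is finite; hence, taking $N_0$ to be the maximum over these finitely many pairs $(S,S')$ of an integer $N_{S,S'}$ with $\{N : N \geq N_{S,S'}\} \subset M(S,S')$, we obtain $M(T(u),I(v)) \supset \{N : N \geq N_0\}$ for \emph{all} $u,v \in L(X)$. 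Applying the equivalence once more, for all $u,v \in L(X)$ and all $N \geq N_0$ there is a word $w \in A^*$ of length $N$ with $uwv \in L(X)$, so $X$ is strongly irreducible by Proposition~\ref{p;equivalences-strong-irreducibility}.

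I do not expect any serious obstacle: this is a pigeonhole argument whose whole point is that a sofic shift has only finitely many ``state sets'' $T(u)$ and $I(v)$, which is exactly what lets one upgrade the a priori $(u,v)$-dependent mixing constant to a uniform one, bridging the gap between the two conditions in Proposition~\ref{p;equivalences-strong-irreducibility}. The only points needing a little care are the routine unwinding of topological mixing in terms of $L(X)$ and the standard fact that a nonempty sofic subshift admits an essential presentation (so that $L(X)$ is read off from the finite paths of $\mathcal{G}$).
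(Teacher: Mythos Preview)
Your proof is correct, and the converse direction is genuinely different from the paper's argument. The paper also passes through Proposition~\ref{p;equivalences-strong-irreducibility} in the end, but to produce the uniform constant $N_0$ it invokes extra structure theory of sofic shifts: it chooses a \emph{strongly connected} presentation admitting a \emph{synchronizing word} $u_0$ (all paths labeled $u_0$ end at a single vertex $q_0$), applies topological mixing once to the specific pair $(u_0,u_0)$ to get closed loops at $q_0$ of all sufficiently large lengths, and then routes every long path through $q_0$ using the diameter of the graph. Your argument bypasses synchronizing words and strong connectivity entirely, observing instead that the existence of a length-$N$ gap word between $u$ and $v$ depends only on the pair of subsets $(T(u),I(v))$ of the finite vertex set, so the a~priori $(u,v)$-dependent mixing threshold factors through finitely many values and is therefore uniformly bounded. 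This is a cleaner pigeonhole route and uses only the definition of a sofic presentation; the paper's route, on the other hand, yields a more explicit $N_0$ in terms of the synchronizing word, its mixing constant, and the diameter of $\mathcal{G}$. For the forward implication, the paper quotes the general fact (Proposition~\ref{p:strongly-irred-implies-top-mixing}) that strongly irreducible subshifts over any group are topologically mixing, whereas you read it off directly from condition~(b) of Proposition~\ref{p;equivalences-strong-irreducibility}; either is fine.
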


From Theorem \ref{t:main-theorem} and Corollary \ref{c:sofic-strongly-irr}, it follows that every topologically mixing sofic subshift over $\Z$ has the Myhill property. 
In fact, Fiorenzi \cite[Corollary 2.21]{fiorenzi-sofic} proved the stronger result that every irreducible sofic subshift over $\Z$ has the Myhill property.
As every subshift of finite type over $\Z$ is sofic, this implies in particular that every irreducible subshift of finite type over $\Z$ has the Myhill property.
A trivial example of a  subshift of finite type over $\Z$ which does not have the Myhill property is provided by the subshift $X = \{x_0,x_1\} \subset \{0,1\}^\Z$, where $x_0$ and $x_1$ are the two constant configurations defined by $x_0(n) = 0 $ and $x_1(n) = 1 $ for all $n \in \Z$. Indeed, the cellular automaton $\tau \colon X \to X$ given by $\tau(x_0) = \tau(x_1) = x_0$ is pre-injective but not surjective.
On the other hand, there exist topologically mixing subshifts over $\Z$ which are not strongly irreducible. An example of such a subshift is provided by the subshift $X \subset \{0,1\}^\Z$ consisting of all bi-infinite sequences of $0$s and $1$s in which there is no word of the form $01^h0^k1$, where $h$ and $k$ are positive integers with  $h \geq k$, but
we do not know whether this subshift has the Myhill property or not. 
Finally, let us remark that there exist topologically mixing subshifts of finite type over the group $\Z^2$ which are not strongly irreducible.
Indeed,   B. Weiss gave in \cite[Section 4]{weiss-sgds} an example of a topologically mixing subshift of finite type $X \subset A^{\Z^2}$, with $A$ of cardinality $4$,
admitting an injective (and therefore pre-injective) cellular automaton $\tau \colon X \to X$ which is not surjective. Such a subshift is not strongly irreducible by 
Theorem \ref{t:main-theorem} since it does not satisfy the Myhill property.
\par 
The paper is organized as follows.
Section \ref{s:background} gathers preliminary  material.
In Section \ref{s:strongly} we establish general properties of strongly irreducible subshifts. We prove in particular that every strongly irreducible subshift is topologically mixing and that strong irreducibility is a conjugacy invariant for subshifts. Section \ref{s:entropic-properties} is devoted to the study of entropic properties of strongly irreducible subshifts.
This section contains several results which may be of independent interest.
It is shown in particular that any non-trivial strongly irreducible subshift $X \subset A^G$, with $G$ amenable and $A$ finite, has positive entropy (Proposition \ref{p:positive-entropy}). Section \ref{s:proof-main-result} contains the proof of Theorem  \ref{t:main-theorem}.
In the final section, we present the proofs of Proposition \ref{p;equivalences-strong-irreducibility} and of Corollary \ref{c:sofic-strongly-irr}. 
  
\section{Background material}
\label{s:background}
In this section we introduce the notation and collect definitions and basic facts that will be used in the sequel. Some proofs of well-known results are given for the convenience of the reader.

\subsection{General notation}
We use $\vert \cdot \vert$ to denote cardinality of finite sets.
\par
Let $G$ be a group and let $A$ be a finite set.
For $\Omega \subset G$, we denote by $\pi_\Omega \colon A^G \to A^\Omega$ the projection map.
For $x \in A^G$ , we denote by $x\vert_\Omega$ the restriction of $x$ 
to $\Omega$, that is, the element $x\vert_\Omega = \pi_\Omega(x) \in A^\Omega$ given by $x\vert_\Omega(g) = x(g)$ for all $g \in \Omega$. 
For $X \subset A^G$, we define $X_\Omega \subset A^\Omega$ by
$$
X_\Omega = \pi_\Omega(X) = \{x\vert_\Omega : x \in X\}
$$ 
Note that it follows from the definition of the prodiscrete topology on $A^G$ that a subset $X \subset A^G$ is closed in $A^G$ if and only if it satisfies the following condition:
if an element $x \in A^G$ satisfies $x\vert_\Omega \in X_\Omega$ for every finite subset $\Omega \subset G$, then one has $x \in X$.

\subsection{Neighborhoods}
Let $G$ be a group.
Let $\Delta$ and $\Omega$ be  subsets of $G$. 
 The $\Delta$-\emph{neighborhood} of $\Omega$ in $G$  is   the subset
 $\Omega^{+ \Delta} \subset G$ defined by   
   \begin{equation*}
  \Omega^{+\Delta} = \{g \in G : g\Delta \cap \Omega \not= \varnothing \} = \Omega\Delta^{-1}.
 \end{equation*}

Note that  $\Omega \subset \Omega^{+\Delta}$ if $1_G \in \Delta$. Note also that $\Omega^{+\Delta_1\Delta_2}=(\Omega^{+\Delta_2})^{+\Delta_1}$ for all $\Omega, \Delta_1, \Delta_2 \subset G$.
On the other hand, we have $\Omega^{+\Delta} \subset \Omega'^{+ \Delta'}$ whenever 
$\Omega \subset \Omega' \subset G$ and  $\Delta \subset \Delta' \subset G$.
Finally, observe that $\Omega^{+ \Delta}$ is finite if both $\Omega$ and $\Delta$ are finite. 

\subsection{Cellular automata}
Let $G$ be a group and let $A$ and $B$ be two finite sets.
A map $\tau \colon X \to Y$ between subshifts $X \subset A^G$ and $Y \subset B^G$ is called a \emph{cellular automaton} if 
$\tau$ is continuous (with respect to the prodiscrete topologies on $A^G$ and $B^G$) and 
$G$-equivariant (i.e., such that $\tau(gx) = g\tau(x)$ for all $g \in G$ and $x \in X$).
It follows from the Curtis-Hedlund theorem \cite{hedlund-endomorphisms} that a map $\tau \colon X \to Y$ is a cellular automaton if and only if there exist a finite set $M \subset G$ and a map 
$\mu \colon A^G \to B$ such that 
\begin{equation}
\label{e:tau-x-g} 
  \tau(x)(g) = \mu \circ \pi_M (g^{-1}x) \quad \text{for all } x \in X \text{ and } g \in G.
\end{equation}
  Such a set $M$ is called a \emph{memory set}
    and $\mu$ is called a \emph{local defining map}
   for the cellular automaton $\tau$.
Note that if $M$ is a memory set for the cellular automaton $\tau$ then any finite subset of $G$ containing $M$ is also a memory set for $\tau$. 
  \par
It immediately follows from the preceding  characterization of cellular automata that a map
   $\tau \colon X \to Y$ between subshifts $X \subset A^G$ and $Y \subset B^G$ is a cellular automaton if and only if there exists a cellular automaton $\sigma \colon A^G \to B^G$ whose restriction to $X$ coincides with $\tau$. 
\par
Suppose that $\tau \colon X \to Y$ is a cellular automaton between the subshifts
$X \subset A^G$ and $Y \subset B^G$. Then its image $\tau(X)$ is a subshift of $B^G$.
Indeed, $\tau(X)$ is closed in $B^G$ by the compactness of $X$ and the continuity of $\tau$, and it is $G$-invariant by the $G$-equivariance of $\tau$ and the $G$-invariance of $X$.
Note also that if the cellular automaton $\tau \colon X \to Y$ is bijective
then its inverse map $\tau^{-1} \colon Y \to X$ is itself a cellular automaton since $\tau^{-1}$ is $G$-equivariant by the $G$-equivariance of $\tau$ and continuous by the continuity of $\tau$ and the compactness of $X$.
\par
Two subshifts $X \subset A^G$ and $Y \subset B^G$ are called \emph{conjugate}
if there exists a $G$-equivariant homeomorphism from $X$ onto $Y$, i.e.,
if there exists a bijective cellular automaton $\tau \colon X \to Y$.

\par
We will frequently use the following fact, which is an immediate consequence of \eqref{e:tau-x-g}:
if $M$ is a memory set for the cellular automaton $\tau \colon X \to Y$, then, given  $x \in X$  and $g \in G$, the element  $\tau(x)(g) \in B$ depends only on the restriction of $x$ to $gM$.
This implies in particular that if $x_1,x_2 \in X$ are two configurations which coincide outside of a subset $\Omega \subset G$ (resp. on $\Omega^{+M^{-1}}$)  then the configurations $\tau(x_1)$ and $\tau(x_2)$ coincide outside of $\Omega^{+M}$
(resp. on $\Omega$).
\par
 A cellular automaton $\tau \colon X \to Y$ between subshifts $X \subset A^G$ and 
 $Y \subset B^G$ is called \emph{pre-injective}
if the equality $\tau(x_1) = \tau(x_2)$ implies $x_1 = x_2$ whenever the configurations 
$x_1,x_2 \in X$ coincide outside of a finite subset of $G$.
The term \emph{pre-injective} was introduced by M. Gromov in \cite[Section 8]{gromov-esav}. 

\subsection{Amenable groups}
\label{ss:amenable-groups}
There are many equivalent definitions of amenability for groups in the literature (see for example \cite{greenleaf}, \cite{paterson}).
Here we shall use the following one, which is known as the \emph{F\o lner condition}:

\begin{definition}
A group $G$ is called \emph{amenable} if there exist a directed set $J$ and a family 
$\FF = (F_j)_{j \in J}$ of nonempty finite subsets of $G$ indexed by $J$ satisfying
\begin{equation}
\label{e:folner-net}
\lim_j \frac{\vert F_j^{+E} \setminus F_j \vert}{\vert F_j \vert} = 0 
\quad \text{ for any finite subset } E \subset G.
\end{equation}
  Such a family $\FF$ is called a \emph{F\o lner net} for $G$.
\end{definition}

All locally finite groups, all solvable groups (and therefore all abelian groups), and all finitely generated groups of subexponential growth are amenable.
The free group of rank $2$ provides an example of a non-amenable group.
As the class of amenable groups is closed under taking subgroups, it follows that  if a group $G$ contains a nonabelian free subgroup then $G$ is not amenable.

\subsection{Entropy}
Let $G$ be an amenable group and let $A$ be a finite set.
Let $\FF = (F_j)_{j \in J}$ be a F\o lner net for $G$.
   The \emph{entropy} $\ent_\FF(X)$  of a subset $X \subset A^G$   is the quantity
\begin{equation}
\label{e:entropy}
\ent_\FF(X) = \limsup_j \frac{\log \vert  X_{F_j} \vert}{\vert F_j \vert}.
\end{equation}
Note that one always has $\ent_\FF(X) \leq \log \vert A \vert = \ent_\FF(A^G)$ and $\ent_\FF(X) \leq \ent_\FF(Y)$ whenever $X \subset Y \subset A^G$. 

\begin{remark}
It can be shown that, when $X$ is a $G$-invariant subset of $A^G$, the $\limsup$ in
\eqref{e:entropy} is in fact a true limit and that it does not depend on $\FF$, 
but we do not need these two facts in the sequel.
\end{remark}

An important (and well known)  property of cellular automata is that they cannot increase entropy:

\begin{proposition}
\label{p:ca-decrease-entropy}
Let $G$ be an amenable group and let $\FF = (F_j)_{j \in J}$ be a F\o lner net for $G$.   
Let $A$ and $B$ be two finite sets.
Let $\tau \colon X \to Y$ be a cellular automaton between subshifts
  $X \subset A^G$ and $Y \subset B^G$.
   Then one has
$\ent_\FF(\tau(Z)) \leq \ent_\FF(Z)$ for every subset $Z \subset X$.
\end{proposition}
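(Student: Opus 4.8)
The plan is to exploit the local nature of $\tau$ together with the F\o lner condition. Fix a memory set $M \subset G$ and a local defining map $\mu \colon A^G \to B$ for $\tau$, so that $\tau(z)(g) = \mu \circ \pi_M(g^{-1}z)$ for all $z \in X$ and $g \in G$. As recorded in the excerpt, this implies that for any finite subset $F \subset G$ the restriction $\tau(z)\vert_F$ depends only on $z\vert_{F^{+M^{-1}}}$. Consequently the assignment $z\vert_{F^{+M^{-1}}} \mapsto \tau(z)\vert_F$ is a well-defined surjection from $Z_{F^{+M^{-1}}}$ onto $\tau(Z)_F$, whence
\[
\vert \tau(Z)_F \vert \le \vert Z_{F^{+M^{-1}}} \vert .
\]

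Next I would compare $\vert Z_{F^{+M^{-1}}} \vert$ with $\vert Z_F \vert$. Since $1_G$ need not lie in $M^{-1}$ we first enlarge $M$, without loss of generality, so that $1_G \in M$ and hence $F \subset F^{+M^{-1}}$; then restriction to $F$ gives a surjection $Z_{F^{+M^{-1}}} \to Z_F$ each of whose fibres has cardinality at most $\vert A \vert^{\vert F^{+M^{-1}} \setminus F \vert}$ (a configuration on $F^{+M^{-1}}$ is determined by its values on $F$ together with its values on $F^{+M^{-1}} \setminus F$). Combining this with the previous inequality and taking logarithms yields
\[
\log \vert \tau(Z)_F \vert \;\le\; \vert F^{+M^{-1}} \setminus F \vert \cdot \log \vert A \vert \;+\; \log \vert Z_F \vert .
\]

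Finally I would specialize $F$ to the members $F_j$ of the F\o lner net $\FF$, divide by $\vert F_j \vert$, and pass to the $\limsup$ over $j$. Writing $E = M^{-1}$, a finite subset of $G$, the F\o lner condition \eqref{e:folner-net} gives $\vert F_j^{+E} \setminus F_j \vert / \vert F_j \vert \to 0$, so the boundary term tends to $0$; using the subadditivity of $\limsup$ (together with the fact that a summand tending to $0$ does not affect it) we conclude $\ent_\FF(\tau(Z)) \le \ent_\FF(Z)$. The argument involves no genuine obstacle; the only point that needs attention is that passing from $F_j$ to its $M^{-1}$-neighbourhood introduces an ``overspill'' of possible configurations along the boundary $F_j^{+M^{-1}} \setminus F_j$, and one must check that this overspill is negligible on the F\o lner scale — which is exactly what the F\o lner condition applied to the finite set $M^{-1}$ provides.
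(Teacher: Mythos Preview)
Your proof is correct and follows essentially the same approach as the paper: enlarge $M$ so that $1_G \in M$, use the locality of $\tau$ to bound $\vert \tau(Z)_{F_j} \vert$ by $\vert Z_{F_j^{+M^{-1}}} \vert$, then control the overspill $\vert Z_{F_j^{+M^{-1}}} \vert / \vert Z_{F_j} \vert$ by $\vert A \vert^{\vert F_j^{+M^{-1}} \setminus F_j \vert}$ and invoke the F\o lner condition. The paper's write-up is virtually identical (it even has a harmless typo, writing $B$ where you correctly write $A$ in the overspill bound).
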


\begin{proof}
Let $W = \tau(Z)$.
Choose a memory set $M$ for $\tau$ with $1_G \in M$.
 Then, observe that $\tau$ induces, by restriction, a surjective map $\tau_j \colon Z_{F_j^{+M^{-1}}} \to 
W_{F_j}$. This implies
\begin{equation}
\label{e:bound-for-W}
\vert W_{F_j} \vert \leq \vert Z_{F_j^{+M^{-1}}} \vert  \quad \text{for all } j \in J.
\end{equation}
 Now, as $Z_{F_j^{+M^{-1}}} \subset Z_{F_j} \times B^{F_j^{+M^{-1}} \setminus F_j}$, we get
$$
\log \vert Z_{F_j^{+M^{-1}}} \vert \leq \log \vert Z_{F_j} \vert + \vert F_j^{+M^{-1}} \setminus F_j \vert \cdot \log \vert B \vert.
$$  
 Using \eqref{e:bound-for-W},   this gives us
$$
 \log \vert W_{F_j} \vert  \leq
 \log \vert Z_{F_j} \vert +   \vert F_j^{+ M^{-1}} \setminus F_j \vert \cdot \log \vert B \vert. 
$$
After dividing by $\vert F_j \vert$ and taking the limsup,
we finally get $\ent_\FF(W) \leq \ent_\FF(Z)$ since 
$\vert F_j^{+ M^{-1}} \setminus F_j \vert/\vert F_j \vert$ tends to $0$ by \eqref{e:folner-net}.
\end{proof}
 \begin{corollary}
\label{c:entropy-invariant}
Let $G$ be an amenable group and let $\FF = (F_j)_{j \in J}$ be a F\o lner net for $G$.   
Let $A$ and $B$ be two finite sets.
Suppose that $X \subset A^G$ and $Y \subset B^G$ are two subshifts such that
there exists a bijective cellular automaton $\tau \colon X \to Y$. 
Then one has $\ent_\FF(X) = \ent_\FF(Y)$.  
\end{corollary}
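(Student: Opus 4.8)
The plan is to apply Proposition \ref{p:ca-decrease-entropy} twice: once to $\tau$ and once to its inverse. First I would take $Z = X$ in Proposition \ref{p:ca-decrease-entropy}, which immediately gives
\[
\ent_\FF(Y) = \ent_\FF(\tau(X)) \leq \ent_\FF(X).
\]
Next, I would invoke the fact, already recorded in the subsection on cellular automata, that the inverse $\tau^{-1} \colon Y \to X$ of the bijective cellular automaton $\tau$ is itself a cellular automaton (it is $G$-equivariant because $\tau$ is, and continuous because $\tau$ is continuous and $X$ is compact). Applying Proposition \ref{p:ca-decrease-entropy} to $\tau^{-1}$ with $Z = Y$ then yields
\[
\ent_\FF(X) = \ent_\FF(\tau^{-1}(Y)) \leq \ent_\FF(Y).
\]
Combining the two inequalities gives $\ent_\FF(X) = \ent_\FF(Y)$, which is the assertion.

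There is essentially no obstacle here: the statement is a formal consequence of the ``entropy cannot increase under a cellular automaton'' principle together with the fact that a bijective cellular automaton has a cellular automaton inverse. The only point deserving a word of care is the latter, but it has already been settled in the preliminary material, so the proof reduces to stringing together these two observations.
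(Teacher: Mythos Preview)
Your proof is correct and is essentially identical to the paper's own argument: apply Proposition~\ref{p:ca-decrease-entropy} to $\tau$ to get $\ent_\FF(Y) \leq \ent_\FF(X)$, then use the fact (recorded earlier) that $\tau^{-1}$ is again a cellular automaton and apply the proposition once more to obtain the reverse inequality.
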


\begin{proof}
We have $\ent_\FF(Y) \leq \ent_\FF(X)$ by Proposition \ref{p:ca-decrease-entropy}.
On the other hand, as observed above,  the inverse map $\tau^{-1} \colon Y \to X$ is also a cellular automaton. 
Therefore, we obtain   $\ent_\FF(X) \leq \ent_\FF(Y)$ by applying again Proposition 
\ref{p:ca-decrease-entropy}.  
\end{proof}

\subsection{Tilings} (see \cite[Section 2]{garden})
\label{ss;tilings}
Let $G$ be a group. Given two subsets $E$ and $E'$  of $G$, one says that a subset $T \subset G$ is  an $(E,E')$-\emph{tiling} if 
 the subsets $gE$, $g \in T$, are pairwise disjoint
 and $G = \bigcup_{g \in T} gE'$.
 \par
The following statement may be deduced from Zorn's lemma (cf. \cite[Lemma 2.2]{garden}):

\begin{lemma} 
\label{l;tilings-exist}
Let $G$ be a group. Let $E$ be a nonempty subset of $G$ and let
$E' = EE^{-1} = \{ab^{-1} : a,b \in E\}$. Then $G$ contains an $(E,E')$-tiling.
\qed 
\end{lemma}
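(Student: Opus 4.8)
The plan is to apply Zorn's lemma to the family of all ``$E$-packings'' in $G$ and then to show that any maximal such packing automatically covers $G$ once each of its points is thickened by $E'$.

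First I would introduce the set $\PP$ consisting of all subsets $T \subset G$ with the property that the translates $gE$, $g \in T$, are pairwise disjoint, partially ordered by inclusion. It is nonempty, since $\varnothing \in \PP$ (and every singleton lies in $\PP$ as well). Next I would check that $\PP$ is closed under unions of chains: if $(T_\alpha)$ is a chain in $\PP$ and $g, h$ are distinct elements of $T = \bigcup_\alpha T_\alpha$, then $g$ and $h$ both belong to a single $T_\alpha$ (by the chain condition), whence $gE \cap hE = \varnothing$; thus $T \in \PP$. By Zorn's lemma, $\PP$ admits a maximal element $T$.

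It then remains to verify that this $T$ is an $(E,E')$-tiling. The translates $gE$, $g \in T$, are pairwise disjoint by the very definition of $\PP$, so only the identity $G = \bigcup_{g \in T} gE'$ needs to be proved. Suppose, for contradiction, that there is some $x \in G$ with $x \notin gE'$ for all $g \in T$. I claim that $T \cup \{x\} \in \PP$. Indeed, if $xE \cap gE \not= \varnothing$ for some $g \in T$, then $xa = gb$ for some $a, b \in E$, so $x = g b a^{-1} \in g(EE^{-1}) = gE'$, contrary to the choice of $x$; hence $xE$ is disjoint from every $gE$ with $g \in T$. Moreover $x \notin T$: since $E$ is nonempty, $1_G \in EE^{-1} = E'$, so if $x$ were in $T$ we would have $x = x \cdot 1_G \in xE'$, again contradicting the choice of $x$. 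Therefore $T \cup \{x\}$ is a strictly larger element of $\PP$, contradicting the maximality of $T$. Hence no such $x$ exists, $G = \bigcup_{g \in T} gE'$, and the proof is complete.

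I do not anticipate any real obstacle; this is a standard maximality/exhaustion argument. The one point requiring a little care is the covering step, which is precisely where the specific choice $E' = EE^{-1}$ (rather than some smaller set) is used: it is exactly large enough that any point not covered by the $gE'$ can be added to the packing without destroying disjointness.
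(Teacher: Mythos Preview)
Your argument is correct and is precisely the standard Zorn's lemma argument the paper alludes to; the paper itself does not spell out the proof but merely notes that the statement ``may be deduced from Zorn's lemma'' and cites an external reference, so your write-up fills in exactly those details.
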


We shall use the following lower estimate for the asymptotic growth of tilings with respect to F\o lner nets in amenable groups (see  \cite[Lemma 4.3]{garden} for the proof):

\begin{lemma} 
\label{l;tiling}
Let $G$ be an amenable group and let $(F_j)_{j \in J}$ be a right F\o lner net for $G$.
Let $E$ and $E'$ be finite subsets of $G$ and suppose that
$T \subset G$ is an $(E,E')$-tiling. 
For each $j \in J$, let  
$T_j $ be the subset of $T$ defined by $T_j = \{g \in T: gE \subset F_j\}$.
Then there exist a real number $\alpha> 0$ and an element $j_0 \in J$ such that 
$\vert T_j \vert \geq \alpha \vert F_j \vert$
for all $j \geq j_0$. 
\qed
\end{lemma}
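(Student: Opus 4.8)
The plan is to bound $|T_j|$ from below by playing the covering property $G=\bigcup_{g\in T}gE'$ against the family of tiles that lie well inside $F_j$, and to control the resulting error term --- the tiles straddling the \emph{boundary} of $F_j$ --- by means of the F\o lner condition \eqref{e:folner-net}. Throughout, set $D=E\cup E'$ and $D^{-1}D=\{a^{-1}b: a,b\in D\}$; both are finite. We may assume $E\neq\varnothing$ (then each $g\in T_j$ lies in $F_je_0^{-1}$ for a fixed $e_0\in E$, so $T_j$ is finite), while $E'\neq\varnothing$ holds automatically because $\bigcup_{g\in T}gE'=G\neq\varnothing$.

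\textbf{Step 1: separating interior and boundary tiles.} I would introduce
$$I_j=\{g\in T: gD\subseteq F_j\},\qquad B_j=\{g\in T: gD\cap F_j\neq\varnothing,\ gD\not\subseteq F_j\}.$$
Since $E\subseteq D$, we have $I_j\subseteq T_j$. Moreover, every $f\in F_j$ lies in some $gE'$ with $g\in T$ by the covering property, and then $gD\cap F_j\neq\varnothing$ (as $E'\subseteq D$), so $g\in I_j\cup B_j$. Hence $F_j\subseteq\bigcup_{g\in I_j\cup B_j}gE'$, and since each left translate $gE'$ has $|E'|$ elements this gives
$$|F_j|\leq(|I_j|+|B_j|)\,|E'|\leq(|T_j|+|B_j|)\,|E'|,$$
that is, $|T_j|\geq |F_j|/|E'|-|B_j|$.

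\textbf{Step 2: the boundary tiles are negligible.} This is the heart of the argument and the only place amenability enters. I claim
$$|B_j|\ \leq\ |D|\cdot|D^{-1}D|\cdot\bigl|F_j^{+D^{-1}D}\setminus F_j\bigr|.$$
To see this, take $g\in B_j$ and pick $d_1,d_2\in D$ with $gd_1\in F_j$ and $gd_2\notin F_j$; since $(gd_1)(d_1^{-1}d_2)=gd_2\notin F_j$ and $d_1^{-1}d_2\in D^{-1}D$, the element $h:=gd_1$ lies in $\partial_j:=\{h\in F_j: h(D^{-1}D)\not\subseteq F_j\}$, whence $g\in\partial_j D^{-1}$ and $|B_j|\leq|\partial_j|\,|D|$. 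In turn, if $h\in\partial_j$, choose $s\in D^{-1}D$ with $hs\notin F_j$; then $hs\in F_j(D^{-1}D)\setminus F_j=F_j^{+D^{-1}D}\setminus F_j$ (using $(D^{-1}D)^{-1}=D^{-1}D$) and $h=(hs)s^{-1}\in(F_j^{+D^{-1}D}\setminus F_j)(D^{-1}D)$, so $|\partial_j|\leq|D^{-1}D|\cdot|F_j^{+D^{-1}D}\setminus F_j|$. This proves the claim, and then \eqref{e:folner-net} applied to the finite set $D^{-1}D$ yields $|B_j|/|F_j|\to 0$.

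\textbf{Step 3: conclusion.} Combining Steps 1 and 2, $|T_j|/|F_j|\geq 1/|E'|-|B_j|/|F_j|\to 1/|E'|>0$, so there is $j_0\in J$ with $|B_j|/|F_j|<1/(2|E'|)$ for all $j\geq j_0$, and then $|T_j|\geq|F_j|/(2|E'|)$; the lemma holds with $\alpha=1/(2|E'|)$. The main obstacle is Step 2, where the only genuine subtlety is matching the left translates $gE$, $gD$ with the right-hand F\o lner perturbation of $F_j$ --- this is exactly why the inversion-invariant set $D^{-1}D$ is the natural object to use. Note finally that the disjointness part of the tiling hypothesis is not actually needed here: only the covering $G=\bigcup_{g\in T}gE'$ is used.
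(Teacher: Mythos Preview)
Your argument is correct. Note that the paper does not actually supply a proof of this lemma: it simply cites \cite[Lemma~4.3]{garden} and places a \qed. Your proof follows the expected line of reasoning --- cover $F_j$ by translates $gE'$, separate the contributing $g\in T$ into those with $gD\subset F_j$ (which lie in $T_j$) and the ``boundary'' ones, and bound the latter via the F\o lner condition --- and the bookkeeping in Step~2 is carried out carefully. Two minor remarks: your observation that only the covering property of the tiling is used (not the disjointness of the $gE$) is accurate; and one could streamline Step~2 slightly by working directly with $S_j=\{g\in T: gE'\cap F_j\neq\varnothing\}$ in place of $I_j\cup B_j$, bounding $|S_j\setminus T_j|$ by $|E|\cdot|F_j^{+E^{-1}E'}\setminus F_j|$, which avoids introducing $D$ and $\partial_j$. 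But this is cosmetic; your version is complete as written.
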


\section{General properties of strongly irreducible subshifts}
\label{s:strongly}

 Let $G$ be a group and let $A$ be a finite set. 

\begin{definition}
\label{def:delta-irred-subshift}
Let $\Delta$ be a finite subset of $G$.
A subshift $X \subset A^G$ is said to be $\Delta$-\emph{irreducible} if it satisfies the following condition:
if $\Omega_1$ and $\Omega_2$ are finite subsets of $G$ such that  
\begin{equation}
\label{e:delta-neighbor-disjoint}
\Omega_1^{+\Delta} \cap \Omega_2 = \varnothing,
\end{equation}
 then, given any two configurations $x_1$ and $x_2$ in $X$, there exists a configuration $x \in X$ which satisfies
$x\vert_{\Omega_1} = x_1\vert_{\Omega_1}$ and $x\vert_{\Omega_2} = x_2\vert_{\Omega_2}$. 
\end{definition}

Note that if a subshift $X \subset A^G$ is $\Delta$-irreducible for some finite subset $\Delta \subset G$, then
$X$ is $\Delta'$-irreducible for any finite subset $\Delta' \subset G$ such that $\Delta \subset \Delta'$.

\begin{definition}
\label{def:strongly-irred-subshift}
A subshift $X \subset A^G$ is called \emph{strongly irreducible} if there exists a finite subset 
$\Delta \subset G$  such that $X$ is $\Delta$-irreducible.
\end{definition}

\begin{remark}
In the case when the group $G$ is finitely generated,
the above definition is equivalent to the one given in \cite[Definition 4.1]{fiorenzi-strongly}
(this immediately follows from the fact that if $G$ is endowed with the word metric associated with a finite symmetric generating subset $S \subset G$, then every ball of $G$ is finite and any finite subset of $G$ is contained in some ball).  
\end{remark}

Recall the following classical definitions from  topological dynamics. Suppose that a group $G$ acts continuously on a topological space $X$. One says that the action of $G$ on $X$ is
\emph{topologically transitive} if, for any pair of nonempty open subsets $U$ and $V$ of $X$, there exists an element $g \in G$ such that $U \cap gV \neq \varnothing$.
One says that the action of $G$ on $X$ is \emph{topologically mixing} if, for any pair of nonempty open subsets $U$ and $V$ of $X$, there exists a finite subset $F \subset G$ such that $U  \cap gV \neq \varnothing$ for all $g \in G \setminus F$.
\par
One says that a subshift $X \subset A^G$ is   \emph{irreducible} if the action of $G$ on $X$ is topologically transitive.
This is equivalent to the fact that $X$ satisfies the following condition:
for any finite subset $\Omega \subset G$ and any two configurations  $x_1, x_2 \in X $,
there exist an element $g \in G$ and   a configuration $x \in X$ such that $x\vert_\Omega = x_1\vert_{\Omega}$ and $ x\vert_{g\Omega} = x_2\vert_{g\Omega}$. 
 \par
One says that a subshift $X \subset A^G$ is \emph{topologically mixing}  if the action of $G$ on $X$ is topologically mixing.
This is equivalent to the fact that $X$ satisfies the following condition:
for any finite subset $\Omega \subset G$ and any two configurations  $x_1, x_2 \in X $,
there exists a finite subset $F \subset G$ such that, for all $g \in G \setminus F$, there exists a configuration $x \in X$ such that $x\vert_\Omega = x_1\vert_{\Omega}$ and $ x\vert_{g\Omega} = x_2\vert_{g\Omega}$. 
Note that if $G$ is finite then every subshift $X \subset A^G$ is topologically mixing and that if $G$ is infinite then every topologically mixing subshift $X \subset A^G$ is irreducible.

\begin{proposition}
\label{p:strongly-irred-implies-top-mixing}
Let $G$ be a group and let $A$ be a finite set.
Then every strongly irreducible subshift $X \subset A^G$ is topologically mixing.
\end{proposition}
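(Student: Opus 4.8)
The plan is to obtain topological mixing directly from $\Delta$-irreducibility, with no further input. By hypothesis there is a finite subset $\Delta\subset G$ for which $X$ is $\Delta$-irreducible (one may harmlessly enlarge $\Delta$ so that $1_G\in\Delta$, but this is not needed). I will verify the combinatorial reformulation of topological mixing recorded just before the statement: given a finite subset $\Omega\subset G$ and configurations $x_1,x_2\in X$, one must exhibit a finite set $F\subset G$ such that for every $g\in G\setminus F$ there is $x\in X$ with $x\vert_\Omega=x_1\vert_\Omega$ and $x\vert_{g\Omega}=x_2\vert_{g\Omega}$.

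The single computation to perform is the identification of the ``bad'' translates. Put $\Omega_1=\Omega$ and $\Omega_2=g\Omega$. Using $\Omega_1^{+\Delta}=\Omega\Delta^{-1}$, one checks that $\Omega_1^{+\Delta}\cap\Omega_2\neq\varnothing$ holds exactly when $\omega_1\delta^{-1}=g\omega_2$ for some $\omega_1,\omega_2\in\Omega$ and some $\delta\in\Delta$, that is, exactly when $g$ lies in $F:=\Omega\Delta^{-1}\Omega^{-1}$. Since $\Omega$ and $\Delta$ are finite, $F$ is a finite subset of $G$.

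For every $g\in G\setminus F$ one then has $\Omega_1^{+\Delta}\cap\Omega_2=\varnothing$, so condition \eqref{e:delta-neighbor-disjoint} is satisfied, and $\Delta$-irreducibility applied to the pair $(x_1,x_2)$ yields $x\in X$ with $x\vert_{\Omega_1}=x_1\vert_{\Omega_1}$ and $x\vert_{\Omega_2}=x_2\vert_{\Omega_2}$; rewriting, $x\vert_\Omega=x_1\vert_\Omega$ and $x\vert_{g\Omega}=x_2\vert_{g\Omega}$, which is precisely what the mixing criterion requires. Hence $X$ is topologically mixing.

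I do not expect any genuine obstacle here. The only points needing a little care are the translation between the neighborhood notation $\Omega^{+\Delta}=\Omega\Delta^{-1}$ and the left translation $g\Omega$ appearing in the mixing condition, and the verification that the exceptional set $F=\Omega\Delta^{-1}\Omega^{-1}$ is finite, being a product of three finite subsets of $G$. (If, for infinite $G$, one also wants irreducibility of $X$, it then follows from the remark preceding the statement that topological mixing implies irreducibility.)
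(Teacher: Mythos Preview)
Your proof is correct and essentially identical to the paper's: you define the same exceptional set $F=\Omega\Delta^{-1}\Omega^{-1}$, verify that $g\notin F$ forces $\Omega^{+\Delta}\cap g\Omega=\varnothing$, and then apply $\Delta$-irreducibility exactly as the paper does. The only difference is that you spell out the equivalence $\Omega^{+\Delta}\cap g\Omega\neq\varnothing \iff g\in\Omega\Delta^{-1}\Omega^{-1}$ in slightly more detail.
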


\begin{proof}
Let $X \subset A^G$ be a strongly irreducible subshift.
Thus, there is a finite subset $\Delta \subset G$ such that $X$ is $\Delta$-irreducible.
Suppose  that we are given a finite subset $\Omega \subset G$ and two configurations $x_1,x_2 \in X $.
Consider the finite subset $F \subset G$ defined by $F = \Omega\Delta^{-1}\Omega^{-1}$.  
If  $g \in G \setminus F$, then we have
$$
\Omega^{+\Delta} \cap (g \Omega)  = \varnothing.
$$
 Since $X$ is $\Delta$-irreducible, this implies that there exists a configuration $x \in X$ such that 
 $x\vert_\Omega = x_1\vert_\Omega$ and $x\vert_{g \Omega} =  x_2\vert_{g\Omega}$.
 This shows that $X$ is topologically mixing. 
 \end{proof}

\begin{proposition}
\label{p:tau-strongly-is-strongly}
Let $G$ be a group and let $A$  be a finite set.
Let $X \subset A^G$ be a subshift.
Suppose that there exist a finite set $B$, a strongly irreducible subshift $Y \subset B^G$, and a surjective cellular automaton $\tau \colon Y \to X$.
Then the subshift $X \subset A^G$ is strongly irreducible.
\end{proposition}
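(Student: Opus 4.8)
The plan is to deduce strong irreducibility of $X$ from that of $Y$ by lifting configurations through $\tau$, gluing the lifts inside $Y$, and projecting them back down. First I would fix a finite subset $\Delta \subset G$ such that $Y$ is $\Delta$-irreducible and a memory set $M \subset G$ for $\tau$ with $1_G \in M$, and then claim that $X$ is $\Delta'$-irreducible for the finite set $\Delta' = M\Delta M^{-1}$.

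To verify this, I would take finite subsets $\Omega_1, \Omega_2 \subset G$ with $\Omega_1^{+\Delta'} \cap \Omega_2 = \varnothing$ and configurations $x_1, x_2 \in X$, and set $\Omega_i' = \Omega_i^{+M^{-1}}$ for $i = 1,2$ (finite subsets of $G$). The first step is a neighborhood computation: using $\Omega^{+\Delta_1\Delta_2} = (\Omega^{+\Delta_2})^{+\Delta_1}$ and $\Omega^{+\Delta} = \Omega\Delta^{-1}$ one gets $(\Omega_1')^{+\Delta} = \Omega_1^{+\Delta M^{-1}} = \Omega_1 M \Delta^{-1}$ and $\Omega_2' = \Omega_2 M$, and a short manipulation shows that $(\Omega_1')^{+\Delta} \cap \Omega_2' \neq \varnothing$ would force $\Omega_1 (M\Delta M^{-1})^{-1} \cap \Omega_2 = \Omega_1^{+\Delta'} \cap \Omega_2 \neq \varnothing$: indeed, if $\omega_1 m_1 \delta^{-1} = \omega_2 m_2$ with $\omega_i \in \Omega_i$, $m_i \in M$, $\delta \in \Delta$, then $\omega_2 = \omega_1 (m_2 \delta m_1^{-1})^{-1} \in \Omega_1^{+\Delta'}$. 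Hence $(\Omega_1')^{+\Delta} \cap \Omega_2' = \varnothing$.

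Next I would use surjectivity of $\tau$ to choose $y_1, y_2 \in Y$ with $\tau(y_i) = x_i$, apply the $\Delta$-irreducibility of $Y$ to $\Omega_1', \Omega_2', y_1, y_2$ (legitimate by the first step) to obtain $y \in Y$ with $y\vert_{\Omega_i'} = y_i\vert_{\Omega_i'}$, and set $x = \tau(y) \in X$. Since $y$ and $y_i$ agree on $\Omega_i' = \Omega_i^{+M^{-1}}$, the property of cellular automata recalled earlier (agreement on $\Omega^{+M^{-1}}$ forces the images to agree on $\Omega$) gives $x\vert_{\Omega_i} = \tau(y_i)\vert_{\Omega_i} = x_i\vert_{\Omega_i}$ for $i = 1,2$, so $x$ is the required configuration and $X$ is $\Delta'$-irreducible, hence strongly irreducible.

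The argument has no deep content; the one place to be careful is the first step — pinning down the correct inflated constant $\Delta' = M\Delta M^{-1}$ and checking that the disjointness hypothesis $\Omega_1^{+\Delta'} \cap \Omega_2 = \varnothing$ really survives the two $M^{-1}$-enlargements $\Omega_i \mapsto \Omega_i^{+M^{-1}}$ needed so that $\tau$ reads off only $x_1$ on $\Omega_1$ and only $x_2$ on $\Omega_2$. Keeping the left/right translations and the inverses straight there is the main (modest) obstacle.
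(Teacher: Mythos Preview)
Your proposal is correct and follows essentially the same route as the paper: fix $\Delta$ witnessing strong irreducibility of $Y$ and a memory set $M$ for $\tau$, prove $X$ is $(M\Delta M^{-1})$-irreducible by enlarging $\Omega_i$ to $\Omega_i^{+M^{-1}}$, lifting $x_i$ to $y_i$ via surjectivity, gluing in $Y$, and pushing back down. The only cosmetic difference is that the paper derives the disjointness $(\Omega_1^{+M^{-1}})^{+\Delta}\cap\Omega_2^{+M^{-1}}=\varnothing$ directly from the set identity $\Omega_1^{+(M\Delta M^{-1})}=\Omega_1 M\Delta^{-1}M^{-1}$, whereas you do the equivalent element-chase.
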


\begin{proof}
Let $\Delta$ be a finite subset of $G$ such that $Y$ is $\Delta$-irreducible
and let $M \subset G$ be a memory set for $\tau$.
We claim that $X$ is $(M\Delta M^{-1})$-irreducible. 
Indeed, let $\Omega_1$ and $\Omega_2$ be two
finite subsets of $G$ such that $\Omega_1^{+(M\Delta M^{-1})} \cap \Omega_2 = \varnothing$
and let $x_1, x_2 \in X$.
Since $\Omega_1^{+(M\Delta M^{-1})}= \Omega_1 M \Delta^{-1} M^{-1}$ and
$\Omega_1 M \Delta^{-1} = (\Omega_1^{+M^{-1}})^{+\Delta}$, we deduce that
\begin{equation*}
(\Omega_1^{+M^{-1}})^{+\Delta} \cap \Omega_2^{+M^{-1}} = \varnothing.
\end{equation*}
Since $\tau$ is surjective, we can find two configurations $y_1$ and $y_2$ in $Y$
such that $\tau(y_1) = x_1$ and $\tau(y_2) = x_2$. As $Y$ is $\Delta$-irreducible,
there exists a configuration $y \in Y$ such that 
\begin{equation}
\label{e;restriction-de-y-a-1}
y\vert_{\Omega_1^{+M^{-1}}} = y_1\vert_{\Omega_1^{+M^{-1}}}
\quad \text{ and} \quad
 y\vert_{\Omega_2^{+M^{-1}}} = y_2\vert_{\Omega_2^{+M^{-1}}}.
\end{equation} 
As the values of $\tau(y)$ on a subset $\Omega \subset G$ only
depend on the values of $y$ on $\Omega^{+M^{-1}}$,  we deduce 
from \eqref{e;restriction-de-y-a-1} that the configuration $x = \tau(y) \in A^G$ satisfies
$$
x\vert_{\Omega_1} = \tau(y)\vert_{\Omega_1} = \tau(y_1)\vert_{\Omega_1} = x_1\vert_{\Omega_1}
$$
and  
$$
x\vert_{\Omega_2} = \tau(y)\vert_{\Omega_2} = \tau(y_2)\vert_{\Omega_2} = x_2\vert_{\Omega_2}.
$$
This proves our claim. Thus $X$ is strongly irreducible.
 \end{proof}

As the full shift $A^G$, viewed as a subshift of itself,  is $\{1_G\}$-irreducible and therefore strongly irreducible for any group $G$ and any finite set $A$,   we immediately deduce from  
Proposition \ref{p:tau-strongly-is-strongly} the following result:

\begin{corollary}
\label{c:image-ca-strong-irr}
Let $G$ be a group and let $A$ and $B$ be two finite sets.
Let $\tau \colon A^G \to B^G$ be a cellular automaton.
Then $\tau(A^G) \subset B^G$ is a strongly irreducible subshift.
\qed
\end{corollary}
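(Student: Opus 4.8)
The final statement to prove is Corollary \ref{c:image-ca-strong-irr}: if $\tau \colon A^G \to B^G$ is a cellular automaton, then $\tau(A^G)$ is a strongly irreducible subshift of $B^G$. The plan is to observe that this is an immediate specialization of Proposition \ref{p:tau-strongly-is-strongly}, which has just been proved.

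The key observation needed is that the full shift $A^G$, regarded as a subshift of itself, is strongly irreducible --- in fact it is $\{1_G\}$-irreducible. This is trivial: if $\Omega_1$ and $\Omega_2$ are finite subsets of $G$ with $\Omega_1^{+\{1_G\}} \cap \Omega_2 = \varnothing$, then since $\Omega_1^{+\{1_G\}} = \Omega_1$, the sets $\Omega_1$ and $\Omega_2$ are disjoint, so given any $x_1, x_2 \in A^G$ one simply defines $x \in A^G$ by setting $x$ equal to $x_1$ on $\Omega_1$, equal to $x_2$ on $\Omega_2$, and arbitrary (say, equal to $x_1$) elsewhere; this $x$ clearly lies in the full shift and restricts correctly.

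With this in hand, I would apply Proposition \ref{p:tau-strongly-is-strongly} with $Y = A^G$ (strongly irreducible by the above) and the cellular automaton $\tau \colon A^G \to B^G$, whose image $X = \tau(A^G)$ is a subshift of $B^G$ by the discussion in Subsection on cellular automata. Since $\tau \colon A^G \to \tau(A^G)$ is surjective onto its image, Proposition \ref{p:tau-strongly-is-strongly} yields at once that $\tau(A^G)$ is strongly irreducible.

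There is essentially no obstacle here: the entire content of the corollary has been front-loaded into Proposition \ref{p:tau-strongly-is-strongly}, and the only genuinely new (and utterly routine) ingredient is the strong irreducibility of the full shift. Consequently the ``proof'' is little more than a citation, which is why the excerpt in fact marks it with \qed and supplies only a one-line justification in the surrounding text.
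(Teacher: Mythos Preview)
Your proposal is correct and follows exactly the paper's approach: observe that the full shift $A^G$ is $\{1_G\}$-irreducible (hence strongly irreducible) and then invoke Proposition \ref{p:tau-strongly-is-strongly} with $Y = A^G$ and $X = \tau(A^G)$. This is precisely the one-line justification the paper gives before stating the corollary.
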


From Proposition \ref{p:tau-strongly-is-strongly}, we also deduce that  strong irreducibility is a conjugacy invariant property:

\begin{corollary}
\label{c:strongly-inv}
Let $G$ be a group and let $A$ and $B$ be two finite sets.
Let $X \subset A^G$ and $Y \subset B^G$ be two conjugate subshifts.
Then $X$ is strongly irreducible if and only if $Y$ is strongly irreducible.
\qed
\end{corollary}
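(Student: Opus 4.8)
The plan is to deduce Corollary~\ref{c:strongly-inv} directly from Proposition~\ref{p:tau-strongly-is-strongly}. By definition, the subshifts $X$ and $Y$ being conjugate means that there is a bijective cellular automaton $\tau \colon X \to Y$. As recalled in the subsection on cellular automata, the inverse map $\tau^{-1} \colon Y \to X$ is then automatically a cellular automaton (it is $G$-equivariant because $\tau$ is, and continuous by compactness of $Y$), and it is clearly bijective as well. Hence the hypothesis is symmetric in $X$ and $Y$, and it suffices to prove a single implication.

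First I would assume that $X$ is strongly irreducible. Then $\tau \colon X \to Y$ is in particular a surjective cellular automaton whose source $X$ is a strongly irreducible subshift. Applying Proposition~\ref{p:tau-strongly-is-strongly}, with the strongly irreducible subshift playing the role of ``$Y$'' in that statement taken to be our $X$, and the target ``$X$'' of that statement taken to be our $Y$, we conclude that $Y$ is strongly irreducible. The converse implication follows in exactly the same way, using the bijective cellular automaton $\tau^{-1} \colon Y \to X$ in place of $\tau$.

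I do not anticipate any genuine difficulty here: every ingredient --- that a conjugacy is a bijective cellular automaton, that the inverse of a bijective cellular automaton is again a cellular automaton, and Proposition~\ref{p:tau-strongly-is-strongly} itself --- has already been established above, so the only point requiring care is to match up the roles of the two subshifts correctly when invoking that proposition.
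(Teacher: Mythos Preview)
Your proposal is correct and follows exactly the approach intended by the paper: the corollary is stated with a \qed immediately after the sentence ``From Proposition~\ref{p:tau-strongly-is-strongly}, we also deduce that strong irreducibility is a conjugacy invariant property,'' so the paper's implicit proof is precisely the application of Proposition~\ref{p:tau-strongly-is-strongly} to a conjugacy and to its inverse. Your write-up spells out these details accurately.
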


\section{Entropy of strongly irreducible subshifts}
\label{s:entropic-properties}

The following result is our main tool for studying entropic properties of strongly irreducible subshifts: 

\begin{lemma}
\label{l:ent-Z-smaller-ent-X}
Let $G$ be an amenable group, $A$ a finite set,  
and $\FF = (F_j)_{j \in J}$ a F\o lner net for $G$.
Let $X \subset A^G$ be a strongly irreducible subshift 
and let $\Delta$ be a finite subset of $G$ such that $1_G \in \Delta$ and $X$ is $\Delta$-irreducible.
Let $D$, $E$ and $E'$ be finite subsets of $G$ with $D^{+ \Delta} \subset E$.
Suppose that $T \subset G$ is an $(E,E')$-tiling and 
 that $Z$ is a subset of $X$ such that
 \begin{equation}
\label{e:pi-gD-Y-strict-in-pi-gD-X-lemma}
 Z_{gD} \subsetneqq  X_{gD} \quad \text{  for all } g \in T.
\end{equation}
 Then one has $\ent_\FF(Z) < \ent_\FF(X)$.
\end{lemma}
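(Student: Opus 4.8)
The plan is to prove, for every index $j$ — with $T_j:=\{g\in T:gE\subset F_j\}$ as in Lemma~\ref{l;tiling} — the quantitative estimate
\[
|Z_{F_j}|\ \le\ (1-\varepsilon)^{|T_j|}\,|X_{F_j}|,\qquad \varepsilon:=|A|^{-|D^{+\Delta}|}\in(0,1).
\]
Granted this, the Lemma follows immediately: taking logarithms, dividing by $|F_j|$, and using $|T_j|\ge\alpha|F_j|$ for $j\ge j_0$ (Lemma~\ref{l;tiling}) together with $\log(1-\varepsilon)<0$ gives $\tfrac{\log|Z_{F_j}|}{|F_j|}\le\alpha\log(1-\varepsilon)+\tfrac{\log|X_{F_j}|}{|F_j|}$ for $j\ge j_0$; passing to $\limsup$ and using $0\le\ent_\FF(X)\le\log|A|<\infty$ yields $\ent_\FF(Z)\le\ent_\FF(X)+\alpha\log(1-\varepsilon)<\ent_\FF(X)$. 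The case $Z=\varnothing$ is trivial; otherwise $X\neq\varnothing$, every $X_{gD}$ is nonempty, and we fix $a_g\in X_{gD}\setminus Z_{gD}$ for each $g\in T$.

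Two ingredients are needed. Geometrically, $(gD)^{+\Delta}=gD^{+\Delta}\subset gE$ for every $g\in T$ because $D^{+\Delta}\subset E$; hence the sets $gD^{+\Delta}$ ($g\in T_j$) are pairwise disjoint subsets of $F_j$, and $gD\subset gD^{+\Delta}$ since $1_G\in\Delta$. Dynamically, $\Delta$-irreducibility yields a single-core resampling principle: for $g\in T_j$, any $r\in X_{F_j\setminus gD^{+\Delta}}$ and any $p\in X_{gD}$, there exists $x\in X$ with $x|_{F_j\setminus gD^{+\Delta}}=r$ and $x|_{gD}=p$. This is just $\Delta$-irreducibility applied to $\Omega_1=gD$ (fed with a configuration realizing $p$) and $\Omega_2=F_j\setminus gD^{+\Delta}$ (fed with a configuration realizing $r$), which satisfy $\Omega_1^{+\Delta}=gD^{+\Delta}$, disjoint from $\Omega_2$.

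The main step is a core-by-core conditioning. Enumerate $T_j=\{g_1,\dots,g_k\}$ and set $\mathcal{A}_i:=\{x\in X_{F_j}:x|_{g_lD}\neq a_{g_l}\text{ for all }l<i\}$, $1\le i\le k+1$, so $\mathcal{A}_1=X_{F_j}$, and $Z_{F_j}\subset\mathcal{A}_{k+1}$ because a word of $Z_{F_j}$ extends to some $\tilde z\in Z\subset X$, hence lies in $X_{F_j}$ and restricts on each $g_lD$ to a word of $Z_{g_lD}$, thus differs from $a_{g_l}$. It suffices to show $|\mathcal{A}_{i+1}|\le(1-\varepsilon)|\mathcal{A}_i|$. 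Write $\mathcal{A}_i^{-}$ for the image of $\mathcal{A}_i$ under restriction to $F_j\setminus g_iD^{+\Delta}$. Since the tiles $g_lE$ with $l<i$ are disjoint from $g_iE\supset g_iD^{+\Delta}$, the condition defining $\mathcal{A}_i$ only concerns coordinates in $F_j\setminus g_iD^{+\Delta}$; together with single-core resampling this shows that, for every $p\in X_{g_iD}$, restriction to $F_j\setminus g_iD^{+\Delta}$ maps $\{x\in\mathcal{A}_i:x|_{g_iD}=p\}$ onto $\mathcal{A}_i^{-}$, so $|\{x\in\mathcal{A}_i:x|_{g_iD}=p\}|\ge|\mathcal{A}_i^{-}|$. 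On the other hand such an $x$ is determined by its restrictions to $F_j\setminus g_iD^{+\Delta}$ and to the buffer $g_iD^{+\Delta}\setminus g_iD$, so $|\{x\in\mathcal{A}_i:x|_{g_iD}=p\}|\le|\mathcal{A}_i^{-}|\cdot|A|^{|D^{+\Delta}|-|D|}$. Summing over $p\in X_{g_iD}$ and using $|X_{g_iD}|\le|A|^{|D|}$ gives $|\mathcal{A}_i|\le|A|^{|D^{+\Delta}|}\,|\mathcal{A}_i^{-}|$; hence $|\{x\in\mathcal{A}_i:x|_{g_iD}=a_{g_i}\}|\ge|\mathcal{A}_i^{-}|\ge\varepsilon|\mathcal{A}_i|$, and since $\mathcal{A}_{i+1}=\{x\in\mathcal{A}_i:x|_{g_iD}\neq a_{g_i}\}$ this is the required $|\mathcal{A}_{i+1}|\le(1-\varepsilon)|\mathcal{A}_i|$. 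Iterating over $i=1,\dots,k$ gives $|Z_{F_j}|\le|\mathcal{A}_{k+1}|\le(1-\varepsilon)^{k}|X_{F_j}|=(1-\varepsilon)^{|T_j|}|X_{F_j}|$.

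The point I expect to be delicate is exactly the one this scheme is built to handle. A head-on comparison obtained by splitting $F_j$ into cores, buffers and the rest would lose, on the $Z$-side, a factor $|A|^{(|D^{+\Delta}|-|D|)|T_j|}$ from the buffers that cannot be matched on the $X$-side — a strongly irreducible subshift need not decouple across a core's $\Delta$-buffer, so this factor is fatal. In the conditioning argument the uncontrolled quantity $|\mathcal{A}_i^{-}|$ occurs in the bounds for both numerator and denominator of the ratio $|\{x\in\mathcal{A}_i:x|_{g_iD}=a_{g_i}\}|/|\mathcal{A}_i|$ and cancels, leaving only the bounded ratio $1/(|X_{g_iD}|\cdot|A|^{|D^{+\Delta}|-|D|})\ge\varepsilon$; that cancellation is what turns the per-tile gain into a genuine constant. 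The remaining ingredients — existence of the tilings, the growth estimate $|T_j|\ge\alpha|F_j|$, and the $\limsup$ bookkeeping — are furnished by Lemmas~\ref{l;tilings-exist} and~\ref{l;tiling}.
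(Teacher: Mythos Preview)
Your proof is correct and follows essentially the same scheme as the paper's: both establish $|Z_{F_j}|\le(1-c)^{|T_j|}|X_{F_j}|$ via a tile-by-tile filtration in which $\Delta$-irreducibility is used at each step to resample the current core against the already-constrained complement, and then conclude via Lemma~\ref{l;tiling}. The only cosmetic differences are that the paper projects onto $F_j\setminus g_iE$ (rather than $F_j\setminus g_iD^{+\Delta}$), tracks the full constraint ``$\in Z_{g_kD}$'' (rather than a single forbidden pattern $a_{g_k}$), and obtains the constant $c=|X_E|^{-1}$ instead of your $|A|^{-|D^{+\Delta}|}$.
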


\begin{proof}
Consider, for each $j \in J$, the subset 
$T_j \subset T$ consisting of all  $ g \in T$ such that $gE \subset F_j$ (cf. Lemma \ref{l;tiling}).
Note that, for all $j \in J$ and $g \in T_j$, we have the inclusions $gD \subset gD^{+\Delta} \subset gE \subset F_j$, since $1_G \in \Delta$ and $D^{+ \Delta} \subset E$.
Given $j \in J$ and a subset $N \subset F_j$, let us denote by $\pi_{N}^{F_j} \colon A^{F_j} \to A^{N}$ the natural projection map.
Consider now,  for each $j \in J$, the subset 
$Q_j \subset  X_{F_j}$ defined by
$$
Q_j = \{q \in X_{F_j} : \pi_{gD}^{F_j}(q) \in Z_{gD} \text{ for all } g \in T_j\}.  
$$
Let us set $\rho = \vert X_{E} \vert$.  
Observe that  
\begin{equation}
\label{e:card-X-gE}
\vert X_{gE} \vert = \rho \quad \text{ for all } g \in G,
\end{equation}
since $X$ is $G$-invariant.
\par
We claim that
\begin{equation}
\label{xi-T-j}
  \vert Q_j \vert \leq (1 - \rho^{-1})^{|T_j|} \vert X_{F_j} \vert \quad \text{ for all } j \in J.
\end{equation}
To prove our claim, let us fix an element $j \in J$ and suppose that  $T_j = \{g_1, g_2, \ldots, g_m\}$, where $m = \vert T_j \vert$. 
Consider, for each $i \in \{0,1,\ldots,m\}$, 
the subset $Q_j^{(i)}  \subset X_{F_j} $
defined by
$$
Q_j^{(i)} = \{q \in X_{F_j}: \pi_{g_kD}^{F_j}(q) \in Z_{g_kD}  \text{ for all } 1 \leq k \leq i\}.
$$ 
  Note that  $Q_j^{(i)} \subset Q_j^{(i - 1)}$
 for all $i=1,2,\ldots, m$.
Let us show  that
\begin{equation}
\label{xi-T-j-i}
  \vert Q_j^{(i)} \vert \leq (1 - \rho^{-1})^{i} \vert X_{F_j} \vert
 \end{equation}
for all $i \in \{0,1,\ldots, m\}$.
This will prove \eqref{xi-T-j} since $Q_j^{(m)} = Q_j$.
\par
To establish \eqref{xi-T-j-i}, we proceed by induction on $i$.
For $i = 0$, we have $Q_j^{(i)} = X_{F_j}$ so that there is nothing to prove.
 Suppose now that $  \vert Q_j^{(i - 1)} \vert \leq (1 - \rho^{-1})^{i - 1} \vert X_{F_j} \vert$ for some $i \leq m - 1$.
 Consider the projection  $P_j^{(i - 1)} = \pi_{F_j \setminus g_iE}^{F_j}(Q_j^{(i - 1)})$ of $Q_j^{(i - 1)}$ on $A^{F_j \setminus g_iE}$.
As $Q_j^{(i - 1)} \subset P_j^{(i - 1)} \times X_{g_iE}$, we have
$\vert Q_j^{(i - 1)} \vert \leq  \vert P_j^{(i - 1)} \vert  \cdot \vert X_{g_iE} \vert$ and therefore
\begin{equation}
\label{e:lower-Pji}
\vert P_j^{(i - 1)} \vert \geq \rho^{-1} \vert Q_j^{(i - 1)} \vert,
\end{equation}
by using \eqref{e:card-X-gE}.
On the other hand, it follows from   \eqref{e:pi-gD-Y-strict-in-pi-gD-X-lemma} that  we can find 
a configuration $x_1  \in X$ such that $x_1\vert_{g_iD} \notin Z_{g_iE}$.
 As $(g_{i}D)^{+\Delta} = g_{i}D^{+\Delta} \subset g_i E$ and $X$ is $\Delta$-irreducible,
 we can find, for each $p \in P_j^{(i - 1)}$, a configuration
 $x \in X$ such that $x\vert_{F_j \setminus g_iE} = p$ and $x\vert_{g_iD} = x_1\vert_{g_iD}$.
 This shows that
 $$
 \vert Q_j^{(i -1)} \setminus Q_j^{(i)} \vert \geq \vert P_j^{(i - 1)} \vert.
 $$
Combining this inequality with  \eqref{e:lower-Pji}, we get
 $$
\vert  Q_j^{(i)} \vert \leq  \vert Q_j^{(i -1)} \vert - \vert P_j^{(i - 1)} \vert \leq 
(1 - \rho^{-1})\vert Q_j^{(i -1)} \vert, 
 $$
 which implies $\vert  Q_j^{(i)} \vert \leq (1- \rho^{-1})^i\vert X_{F_j}  \vert$ by our induction hypothesis. 
  This completes the proof of  \eqref{xi-T-j-i} and therefore of \eqref{xi-T-j}. 
\par
As $Z_{F_j} \subset Q_j$, we deduce from \eqref{xi-T-j} that 
$$
\vert Z_{F_j} \vert \leq (1 - \rho^{-1})^{|T_j|} \vert X_{F_j} \vert \quad \text{for all } j \in J.
$$
On the other hand, it follows from  Lemma \ref{l;tiling} that we can find a real number $\alpha > 0$ and an element $j_0 \in J$ such that
$\vert T_j \vert \geq \alpha \vert F_j\vert$ for all $j \geq j_0$.
This implies 
\begin{align*}
\frac{\log \vert Z_{F_j} \vert}{\vert F_j \vert} 
&\leq \frac{\log \vert X_{F_j} \vert}{\vert F_j \vert} + \frac{\vert T_j \vert}{\vert F_j \vert}\log(1 - \rho^{-1})\\
&\leq \frac{\log \vert X_{F_j} \vert}{\vert F_j \vert} + \alpha \log(1 - \rho^{-1})   
\end{align*}
for all $j \geq j_0$. Finally, by taking the limsup, this gives us 
$\ent_\FF(Z) \leq \ent_\FF(X) + \alpha \log(1 - \rho^{-1}) < \ent_\FF(X)$.
 \end{proof}

Let us give some direct applications of Lemma \ref{l:ent-Z-smaller-ent-X}.

\begin{proposition}
\label{p:entropy-increasing}
Let $G$ be an amenable group, $A$ a finite set,  
and $\FF = (F_j)_{j \in J}$ a F\o lner net for $G$.
Let $X \subset A^G$ be a strongly irreducible subshift.
Suppose that $Y \subset A^G$ is a subshift which is strictly contained in $X$.
Then one has $\ent_\FF(Y) < \ent_\FF(X) $. 
 \end{proposition}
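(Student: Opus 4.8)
The plan is to deduce Proposition \ref{p:entropy-increasing} directly from Lemma \ref{l:ent-Z-smaller-ent-X} by producing, for a suitable finite set $D$, a single group element $g$ (and hence a translate) where the $gD$-projections of $Y$ and $X$ differ, and then spreading this discrepancy over a whole tiling using the $G$-invariance of both subshifts. First I would fix a finite subset $\Delta \subset G$ with $1_G \in \Delta$ such that $X$ is $\Delta$-irreducible (enlarging $\Delta$ so it contains $1_G$ costs nothing). Since $Y \subsetneqq X$ and $Y$ is closed, there must be a configuration $x \in X \setminus Y$; by the characterization of closed subsets recalled in the ``General notation'' subsection, closedness of $Y$ means there is a finite subset $D \subset G$ with $x\vert_D \notin Y_D$, hence $Y_D \subsetneqq X_D$.

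Next I would pass to a tiling adapted to $D$. Set $E = D^{+\Delta}$, which is finite, and $E' = EE^{-1}$; by Lemma \ref{l;tilings-exist} there is an $(E,E')$-tiling $T \subset G$, and trivially $D^{+\Delta} \subset E$. I claim that $Y_{gD} \subsetneqq X_{gD}$ for every $g \in T$, not just for $g = 1_G$. This is where $G$-invariance enters: for any $g \in G$, the shift $h \mapsto gh$ restricts to a bijection $X_D \to X_{gD}$ (sending $y\vert_D$ to $(g^{-1}y')\vert_D$ appropriately, i.e. $(gy)\vert_{gD}$ corresponds to $y\vert_D$) carrying $Y_D$ onto $Y_{gD}$, because both $X$ and $Y$ are $G$-invariant subshifts. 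Consequently $\vert Y_{gD}\vert = \vert Y_D\vert < \vert X_D\vert = \vert X_{gD}\vert$ for all $g \in G$, in particular for all $g \in T$. Thus condition \eqref{e:pi-gD-Y-strict-in-pi-gD-X-lemma} of Lemma \ref{l:ent-Z-smaller-ent-X} holds with $Z = Y$.

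Finally I would invoke Lemma \ref{l:ent-Z-smaller-ent-X} with this choice of $\Delta$, $D$, $E$, $E'$, $T$ and $Z = Y$, which yields directly $\ent_\FF(Y) < \ent_\FF(X)$, completing the proof. The only mild subtlety, and the step I would write out most carefully, is the $G$-invariance argument showing $Y_{gD} \subsetneqq X_{gD}$ for all $g \in T$ from the single discrepancy at $D$; everything else is bookkeeping. One should also note the degenerate case: if $G$ is finite then every subshift is trivially $G$-irreducible, the tiling can be taken to be $T = \{1_G\}$ with $E = G$, and the statement $\vert Y_G\vert = \vert Y\vert < \vert X\vert = \vert X_G\vert$ still forces strictly smaller entropy (the Følner net is then eventually equal to $G$), so no separate treatment is really needed.
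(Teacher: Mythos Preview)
Your proposal is correct and follows essentially the same route as the paper's proof: find a finite $D$ with $Y_D \subsetneqq X_D$ from closedness of $Y$, use $G$-invariance to get $Y_{gD} \subsetneqq X_{gD}$ for all $g$, set $E = D^{+\Delta}$, take an $(E,E')$-tiling via Lemma~\ref{l;tilings-exist}, and apply Lemma~\ref{l:ent-Z-smaller-ent-X} with $Z = Y$. The only differences are cosmetic (you spell out the closedness argument and the $G$-invariance bijection more explicitly, and you add an unnecessary remark on finite $G$).
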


\begin{proof}
As $Y \subsetneqq X$ and $Y$ is closed in $A^G$, 
we can find a finite subset $D \subset G$ such that $Y_D \subsetneqq X_D$.
By  the $G$-invariance of $X$ and $Y$,  this implies
 $Y_{gD} \subsetneqq  X_{gD}$
 for all $g \in G$.
\par
Let $\Delta$ be a finite subset of $G$ such that $1_G \in \Delta$ and $X$ is $\Delta$-irreducible, and 
take  $E = D^{+\Delta}$. By virtue of Lemma \ref{l;tilings-exist}, we can find a finite subset $E'  \subset G$ and an $(E,E')$-tiling  $T \subset G$.
Then, by taking $Z = Y$,  all the hypotheses in Lemma \ref{l:ent-Z-smaller-ent-X} are satisfied so that 
we get $\ent_\FF(Y) < \ent_\FF(X)$.
\end{proof}

\begin{corollary}
\label{c:injective-ca-surjective}
Let $G$ be an amenable group and let $\FF = (F_j)_{j \in J}$ be a F\o lner net for $G$.   
Let $A$ and $B$ be two finite sets.
Suppose that $X \subset A^G$ and $Y \subset B^G$ are two subshifts with $Y$ is strongly irreducible and $\ent_\FF(X) = \ent_\FF(Y)$.  
 Then every injective cellular automaton $\tau \colon X \to Y$ is surjective. 
 \end{corollary}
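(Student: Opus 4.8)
The plan is to argue by contradiction, using the fact that an injective cellular automaton is a conjugacy onto its image, combined with the strict monotonicity of entropy for strongly irreducible subshifts (Proposition \ref{p:entropy-increasing}).

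First I would set $Z = \tau(X)$. Since $\tau \colon X \to Y$ is a cellular automaton, its image $Z$ is a subshift of $B^G$, and clearly $Z \subset Y$. Because $\tau$ is injective, its corestriction $\tau \colon X \to Z$ is a bijective cellular automaton (it is $G$-equivariant, continuous, and bijective onto $Z$, which is a subshift). Hence Corollary \ref{c:entropy-invariant} applies and gives $\ent_\FF(Z) = \ent_\FF(X)$, which equals $\ent_\FF(Y)$ by hypothesis.

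Now suppose, towards a contradiction, that $\tau$ is not surjective, i.e. $Z = \tau(X) \subsetneqq Y$. Since $Y$ is strongly irreducible and $Z$ is a subshift strictly contained in $Y$, Proposition \ref{p:entropy-increasing} applies, with $Y$ playing the role of the ambient strongly irreducible subshift and $Z$ the role of the proper subshift, yielding $\ent_\FF(Z) < \ent_\FF(Y)$. This contradicts the equality $\ent_\FF(Z) = \ent_\FF(Y)$ obtained above. Therefore $\tau(X) = Y$, that is, $\tau$ is surjective.

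I do not expect any real obstacle: the only points needing care are (i) verifying that $\tau \colon X \to \tau(X)$ is genuinely a bijective cellular automaton, so that Corollary \ref{c:entropy-invariant} is legitimately invoked (injectivity is given, surjectivity onto the image is automatic, and the image is a subshift as recalled in Section \ref{s:background}), and (ii) noting that the hypothesis of strong irreducibility is on the target $Y$, which is precisely where Proposition \ref{p:entropy-increasing} requires it, and not on $X$.
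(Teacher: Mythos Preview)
Your argument is correct and follows essentially the same route as the paper: use Corollary~\ref{c:entropy-invariant} to get $\ent_\FF(\tau(X)) = \ent_\FF(X) = \ent_\FF(Y)$ from injectivity, and then Proposition~\ref{p:entropy-increasing} (applied to the strongly irreducible target $Y$) to force $\tau(X) = Y$. The only cosmetic difference is that you phrase the last step as a contradiction, while the paper states it directly.
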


\begin{proof}
If $\tau \colon X \to Y$ is an injective cellular automaton,
then Proposition \ref{p:entropy-increasing} implies  that $\tau(X) = Y$, since the subshift $\tau(X) \subset Y$ satisfies
$\ent_\FF(\tau(X)) = \ent_\FF(X) = \ent_\FF(Y)$ by Corollary \ref{c:entropy-invariant} and our hypotheses on $X$ and $Y$.
\end{proof}

Given a group $G$ and a finite set $A$, a subshift $X \subset A^G$ is called \emph{surjunctive} if every injective cellular automaton $\tau \colon X \to X$ is surjective.
By taking $A = B$ and $X = Y$ in Corollary \ref{c:injective-ca-surjective},
we get the following result (which is also an immediate consequence of 
Theorem \ref{t:main-theorem} since injectivity implies pre-injectivity):

\begin{corollary}
\label{c:strongly-surjunctive}
Let $G$ be an amenable group and let $A$ be a finite set.
Then every strongly irreducible subshift $X \subset A^G$ is surjunctive.
\qed
\end{corollary}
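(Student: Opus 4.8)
The plan is to obtain this statement as an immediate specialization of Corollary \ref{c:injective-ca-surjective}. Recall that a subshift $X \subset A^G$ is, by definition, surjunctive exactly when every injective cellular automaton $\tau \colon X \to X$ is surjective; so the assertion follows once we check that the hypotheses of Corollary \ref{c:injective-ca-surjective} are satisfied in the special case $B = A$ and $Y = X$.

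First I would fix a F\o lner net $\FF = (F_j)_{j \in J}$ for $G$, which exists since $G$ is amenable. Then I would verify the two conditions needed to apply Corollary \ref{c:injective-ca-surjective} with $Y = X$: the target subshift $Y = X$ is strongly irreducible by hypothesis, and the entropy condition $\ent_\FF(X) = \ent_\FF(Y)$ holds trivially because $Y = X$. Corollary \ref{c:injective-ca-surjective} then yields that every injective cellular automaton $\tau \colon X \to X$ is surjective, which is precisely the definition of surjunctivity for $X$.

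There is essentially no genuine obstacle here: all the real work has already been carried out in Lemma \ref{l:ent-Z-smaller-ent-X} and Proposition \ref{p:entropy-increasing}, whose effect is that a proper subshift of a strongly irreducible subshift over an amenable group has strictly smaller entropy. Combined with the facts that cellular automata do not increase entropy (Proposition \ref{p:ca-decrease-entropy}) and preserve it under conjugacy (Corollary \ref{c:entropy-invariant}), this forces the image $\tau(X)$, which has the same entropy as $X$ when $\tau$ is injective, to coincide with $X$. Alternatively, since injectivity implies pre-injectivity, the statement is also subsumed by Theorem \ref{t:main-theorem}; but the derivation from Corollary \ref{c:injective-ca-surjective} is shorter and does not invoke the full strength of the main theorem.
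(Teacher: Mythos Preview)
Your proposal is correct and matches the paper's own derivation exactly: the paper obtains Corollary~\ref{c:strongly-surjunctive} by taking $A = B$ and $X = Y$ in Corollary~\ref{c:injective-ca-surjective}, and also remarks, just as you do, that it follows alternatively from Theorem~\ref{t:main-theorem} since injectivity implies pre-injectivity.
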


From Lemma \ref{l:ent-Z-smaller-ent-X}, we can also deduce that non-trivial strongly irreducible subshifts have positive entropy:

\begin{proposition}
\label{p:positive-entropy}
Let $G$ be an amenable group,  $A$  a finite set,  
and $\FF = (F_j)_{j \in J}$ a F\o lner net for $G$.
Let $X \subset A^G$ be a strongly irreducible subshift containing at least two distinct configurations.
Then one has $\ent_\FF(X) > 0$. 
 \end{proposition}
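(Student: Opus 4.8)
The plan is to apply Lemma~\ref{l:ent-Z-smaller-ent-X} with $Z$ taken to be a \emph{single} configuration; the whole point is that a singleton is an admissible choice of $Z$ in that lemma, and its entropy is $0$.

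\textbf{Step 1: reduce to a difference at the identity.} Since $X$ is $G$-invariant and contains two distinct configurations, there are $y_1,y_2\in X$ and an element $g_0\in G$ with $y_1(g_0)\neq y_2(g_0)$; replacing $y_i$ by $g_0^{-1}y_i\in X$, I may assume there exist $x_1,x_2\in X$ with $x_1(1_G)\neq x_2(1_G)$. Setting $D=\{1_G\}$, this says $\vert X_D\vert\geq 2$, and by $G$-invariance of $X$ one gets $\vert X_{gD}\vert=\vert X_D\vert\geq 2$ for every $g\in G$.

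\textbf{Step 2: build the tiling and apply the lemma.} Since $X$ is strongly irreducible, pick a finite $\Delta_0\subset G$ with $X$ being $\Delta_0$-irreducible and set $\Delta=\Delta_0\cup\{1_G\}$, so that $1_G\in\Delta$ and $X$ is still $\Delta$-irreducible. Put $E=D^{+\Delta}$ and $E'=EE^{-1}$; by Lemma~\ref{l;tilings-exist} there is an $(E,E')$-tiling $T\subset G$. Now take $Z=\{x_1\}\subset X$. For each $g\in T$ we have $Z_{gD}=\{x_1\vert_{gD}\}$, a one-element set, while $\vert X_{gD}\vert\geq 2$ by Step 1; hence $Z_{gD}\subsetneqq X_{gD}$ for all $g\in T$. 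All hypotheses of Lemma~\ref{l:ent-Z-smaller-ent-X} are therefore satisfied (with this $D$, $\Delta$, $E$, $E'$, $T$, $Z$), and it yields $\ent_\FF(Z)<\ent_\FF(X)$. But $Z_{F_j}=\{x_1\vert_{F_j}\}$ has cardinality $1$ for every $j$, so $\ent_\FF(Z)=0$, and we conclude $\ent_\FF(X)>0$.

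I do not expect any genuine obstacle here: the argument is short once one observes that $Z$ need not be a subshift in Lemma~\ref{l:ent-Z-smaller-ent-X}, so the degenerate choice $Z=\{x_1\}$ is legitimate. The only place that needs (routine) care is the reduction in Step~1, which is what makes the one-point window $D=\{1_G\}$ already separate two configurations of $X$, so that the strict inclusions $Z_{gD}\subsetneqq X_{gD}$ hold for \emph{every} $g$ in the tiling. (An alternative, slightly more hands-on route would be to bound $\vert X_{F_j}\vert$ from below directly, by freely prescribing one of two values at each point of $T_j=\{g\in T: gE\subset F_j\}$ using $\Delta$-irreducibility and Lemma~\ref{l;tiling}, giving $\vert X_{F_j}\vert\geq 2^{\vert T_j\vert}$; but routing everything through Lemma~\ref{l:ent-Z-smaller-ent-X} is cleaner and avoids repeating that computation.)
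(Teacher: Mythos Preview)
Your argument is correct. The key observation—that Lemma~\ref{l:ent-Z-smaller-ent-X} requires $Z$ only to be a \emph{subset} of $X$, so that the singleton $Z=\{x_1\}$ is admissible—is exactly right, and the verification that $Z_{gD}\subsetneqq X_{gD}$ for all $g\in T$ goes through as you wrote (in fact the reduction to $D=\{1_G\}$ is a convenience rather than a necessity: by $G$-invariance one has $\vert X_{gD}\vert=\vert X_D\vert$ for any finite $D$, so any $D$ on which two configurations of $X$ differ would work).

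The paper takes a different route: it does \emph{not} apply Lemma~\ref{l:ent-Z-smaller-ent-X} here. Instead it introduces Lemma~\ref{l:strongly-implies-stab} (extending $\Delta$-irreducibility to infinite families of separated sets) and uses it to prescribe, for every $\iota\colon T\to\{0,1\}$, a configuration in $X$ matching $g x_{\iota(g)}$ on $gD$; this yields the explicit lower bound $\vert X_{F_j}\vert\geq 2^{\vert T_j\vert}$ and hence $\ent_\FF(X)\geq\alpha\log 2$ via Lemma~\ref{l;tiling}. This is precisely the ``hands-on'' alternative you sketch in your closing parenthetical. Your approach is more economical for this proposition, since it recycles Lemma~\ref{l:ent-Z-smaller-ent-X} and does not require Lemma~\ref{l:strongly-implies-stab} at all; the paper's approach, on the other hand, gives a cleaner explicit constant ($\alpha\log 2$ rather than the $-\alpha\log(1-\rho^{-1})$ implicit in the proof of Lemma~\ref{l:ent-Z-smaller-ent-X}) and anyway needs Lemma~\ref{l:strongly-implies-stab} later for Theorem~\ref{t:pre-inj-si-source}.
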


Note that the previous statement is a direct consequence of Proposition \ref{p:entropy-increasing} in the case when there exists a subshift $Y \subset A^G$ such that
$\varnothing \subsetneqq Y \subsetneqq X$ (i.e., when $X$ is not \emph{minimal}), since this implies $0\leq \ent_\FF(Y) < \ent_\FF(X)$. For the general case, we need the following result which will also be used in the proof of Theorem \ref{t:pre-inj-si-source}:

\begin{lemma}
\label{l:strongly-implies-stab}
Let $G$ be a group and let $A$ be a finite set.
Let $\Delta$ be a finite subset of $G$ and let $X \subset A^G$ be a $\Delta$-irreducible subshift.
Suppose that $(\Omega_i)_{i \in I}$ is a (possibly infinite) family of (possibly infinite) subsets of $G$ such that
\begin{equation}
\label{e:family-Delta-separated}
\Omega_i^{+ \Delta} \cap \left(\bigcup_{k \in I \setminus \{i\}} \Omega_k \right) = \varnothing \quad \text{ for all } i \in I.
\end{equation}
Then, given any family $(x_i)_{i \in I}$ of configurations in $X$, there exists a configuration $x \in X$
which coincides with $x_i$ on $\Omega_i$ for all $i \in I$.  
\end{lemma}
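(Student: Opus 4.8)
\emph{Proof plan.} The plan is to reduce the statement to the two‑subset case that defines $\Delta$‑irreducibility (Definition \ref{def:delta-irred-subshift}), in two stages: an induction that handles \emph{finitely many finite} subsets, followed by a routine compactness argument to reach the general (possibly infinite) family of (possibly infinite) subsets.

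First I would prove, by induction on $n\ge 1$, the following finitary version: if $\Omega_1,\dots,\Omega_n$ are finite subsets of $G$ with $\Omega_i^{+\Delta}\cap\bigcup_{k\neq i}\Omega_k=\varnothing$ for each $i$, then for all $x_1,\dots,x_n\in X$ there is $x\in X$ with $x\vert_{\Omega_i}=x_i\vert_{\Omega_i}$ for every $i$. The case $n=1$ is trivial. For the inductive step, the sets $\Omega_1,\dots,\Omega_{n-1}$ still satisfy the hypothesis, so the induction hypothesis yields $x'\in X$ agreeing with $x_k$ on $\Omega_k$ for $k\le n-1$. Put $\Omega'=\Omega_1\cup\dots\cup\Omega_{n-1}$ and $\Omega''=\Omega_n$, both finite. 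Since $\Omega'^{+\Delta}=\bigcup_{k\le n-1}\Omega_k^{+\Delta}$ and, for each $k\le n-1$, the hypothesis at index $k$ gives $\Omega_k^{+\Delta}\cap\Omega_n=\varnothing$, we obtain $\Omega'^{+\Delta}\cap\Omega''=\varnothing$. Applying Definition \ref{def:delta-irred-subshift} to the pair $(\Omega',\Omega'')$ and the configurations $x'$ and $x_n$ produces $x\in X$ with $x\vert_{\Omega'}=x'\vert_{\Omega'}$ and $x\vert_{\Omega''}=x_n\vert_{\Omega''}$, which is exactly what is wanted.

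For the general case I would argue by compactness. For every finite subset $J_0\subseteq I$ and every finite subset $F\subseteq G$, set
\[
X(J_0,F)=\{\,y\in X:\ y\vert_{\Omega_i\cap F}=x_i\vert_{\Omega_i\cap F}\ \text{ for all }i\in J_0\,\}.
\]
This is a closed (in fact clopen) subset of $X$, and it is nonempty by the finitary version applied to the finite subsets $(\Omega_i\cap F)_{i\in J_0}$, whose required separation property follows from $(\Omega_i\cap F)^{+\Delta}\subseteq\Omega_i^{+\Delta}$. The family of all such sets has the finite intersection property, since $X(J_0,F)\cap X(J_0',F')\supseteq X(J_0\cup J_0',F\cup F')$. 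As $X$ is compact, the intersection over all $J_0$ and $F$ is nonempty, and any configuration $x$ in it satisfies $x\vert_{\Omega_i}=x_i\vert_{\Omega_i}$ for every $i\in I$: given $i$ and $g\in\Omega_i$, membership of $x$ in $X(\{i\},\{g\})$ forces $x(g)=x_i(g)$.

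The only substantive step is the inductive step above, where one must check that the neighbourhood condition $\Omega'^{+\Delta}\cap\Omega''=\varnothing$ survives the grouping of the first $n-1$ sets; once that is in place, the rest is the definition plus a standard finite‑intersection‑property compactness argument. (Note also that the prescriptions on the $\Omega_i$ need not be checked for consistency on the overlaps $\Omega_i\cap\Omega_j$: the construction yields such an $x$ directly.)
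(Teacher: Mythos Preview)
Your proof is correct and follows essentially the same route as the paper: an induction on the number of sets to cover the finite/finite case, followed by a compactness (finite-intersection-property) argument for the general case. The only cosmetic difference is that you index the closed sets by pairs $(J_0,F)$ with $J_0\subset I$ finite and $F\subset G$ finite, whereas the paper indexes them by a single finite $\Lambda\subset G$ and lets $\Psi_i=\Lambda\cap\Omega_i$; your variant makes the reduction to finitely many indices more explicit, but otherwise the arguments coincide.
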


\begin{proof}
In the case when the index set $I$ and the subsets $\Omega_i$ are all finite, the statement immediately follows from the definition of $\Delta$-irreducibility by induction on the cardinality 
of $I$.
\par
Let us now treat the general case.
Denote by $\PP_f(G)$ the set of all finite subsets of $G$.
 For each $\Lambda \in \PP_f(G)$, consider the subset $X(\Lambda) \subset X$ consisting of all configurations in $X$ which coincide 
 with $x_i$ on $\Lambda \cap \Omega_i$ for all $i \in I$. 
 First observe that $X(\Lambda)$ is closed in $X$  for each $\Lambda \in \PP_f(G)$
 by the properties of the prodiscrete topology.
 On the other hand, 
 if we fix  $\Lambda \in \PP_f(G)$, then the  subsets $\Psi_i = \Lambda \cap \Omega_i$  are all contained in $\Lambda$ and satisfy
\begin{equation*}
 \Psi_i^{+ \Delta} \cap \left(\bigcup_{k \in I \setminus \{i\}} \Psi_k \right) = \varnothing \quad \text{ for all } i \in I
\end{equation*}
by \eqref{e:family-Delta-separated}. As $\Lambda$ is finite, it follows that $X(\Lambda) \not= \varnothing$ by the first step in the proof.
As
$$
X(\Lambda_1) \cap X(\Lambda_2) \cap  \dots \cap X(\Lambda_n) = X(\Lambda_1 \cup \Lambda_2 \cup  \dots \cup \Lambda_n),
$$
we deduce that  $X(\Lambda_1) \cap X(\Lambda_2)\cap  \dots \cap X(\Lambda_n) \not= \varnothing$ for all 
$\Lambda_1,\Lambda_2,\dots , \Lambda_n \in \PP_f(G)$.
Thus,   $(X(\Lambda))_{\Lambda \in \PP_f(G)}$ is a family of closed subsets of $X$  with the finite intersection property.
By compactness of $X$, the intersection of this family is not empty.
This means that there exists a configuration $x \in X$ such that $x \in X(\Lambda)$ for each finite subset $\Lambda \subset G$.
Clearly, such an  $x$ has the required properties.
\end{proof}

 \begin{proof}[Proof of Proposition \ref{p:positive-entropy}]
Choose two  distinct configurations $x_0,x_1 \in X$.
Then there exists a finite subset $D \subset G$ such that $x_0\vert_D \not= x_1\vert_D$.
Note that this implies $(gx_0)\vert_{gD} \not= (gx_1)\vert_{gD}$ for all $g \in G$.
Let $\Delta$ be a finite subset of $G$ such that $X$ is $\Delta$-irreducible and $1_G \in \Delta$.
Let $E = D^{+\Delta}$.
By Lemma \ref{l;tilings-exist}, we can find a finite subset $E' \subset G$ and a $(E,E')$-tiling 
$T \subset G$.
Consider now the subset $Z \subset X$ consisting of all the configurations $z \in X$ such that, for all $g \in T$, 
one has either $z\vert_{gD} = (gx_0)\vert_{gD}$ or $z\vert_{gD} = (gx_1)\vert_{gD}$.
\par
By applying Lemma \ref{l:strongly-implies-stab} to the family $(gD)_{g \in T}$, we deduce that, given any map
 $\iota \colon T \to \{0,1\}$, there exists a configuration $x \in X$ such that
$x\vert_{gD} = (gx_{\iota (g)})\vert_{gD}$ for all $g \in T$.
 We deduce that
\begin{equation}
\label{e:card-Z-greater-2-power}
\vert Z_{F_j} \vert \geq 2^{\vert T_j \vert} \quad \text{ for all } j \in J,
\end{equation}
where, as above, $T_j = \{g \in T : gE \subset F_j \}$.
On the other hand, it follows from Lemma \ref{l;tiling} that there exist $\alpha > 0$ and $j_0 \in J$ such that 
$\vert T_j \vert \geq \alpha \vert F_j \vert$ for all $j \geq j_0$.
Using \eqref{e:card-Z-greater-2-power}, this gives us $\ent_\FF(Z) \geq \alpha \log 2$. 
As $Z \subset X$, this implies
$0 < \ent_\FF(Z) \leq \ent_\FF(X)$.
   \end{proof}

Combining  Proposition \ref{p:positive-entropy} and Corollary \ref{c:image-ca-strong-irr}, we get:   

   \begin{corollary}
   \label{c:image-ca-positive-ent}
   Let $G$ be an amenable group,  $A$ and $B$ two finite sets,   
   $\FF = (F_j)_{j \in J}$ a F\o lner net for $G$,
and $\tau \colon A^G \to B^G$ a non-constant cellular automaton.
Then one has $\ent_\FF(\tau(A^G)) > 0$.
\qed
   \end{corollary}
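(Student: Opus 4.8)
The plan is to derive the statement immediately from Corollary \ref{c:image-ca-strong-irr} together with Proposition \ref{p:positive-entropy}. First I would set $X = \tau(A^G)$. By Corollary \ref{c:image-ca-strong-irr}, $X$ is a strongly irreducible subshift of $B^G$. Hence Proposition \ref{p:positive-entropy} will give $\ent_\FF(X) > 0$ as soon as one checks that $X$ contains at least two distinct configurations.

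To verify that last point I would invoke the hypothesis that $\tau$ is non-constant: there exist configurations $x, x' \in A^G$ with $\tau(x) \neq \tau(x')$, and these are then two distinct elements of $X = \tau(A^G)$. (If one wishes, one can also remark that a constant cellular automaton must take as its unique value a configuration fixed by the $G$-shift, i.e.\ a constant configuration; but this refinement is not needed, since only the bare fact that $\tau$ takes at least two values is used.)

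Combining these two observations, Proposition \ref{p:positive-entropy} applied to the strongly irreducible subshift $X$, which contains at least two configurations, yields $\ent_\FF(\tau(A^G)) = \ent_\FF(X) > 0$, as desired.

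I do not anticipate any genuine obstacle here: all of the substance is already packaged into Corollary \ref{c:image-ca-strong-irr} and Proposition \ref{p:positive-entropy}, and the only step requiring a word of care is the routine translation of ``$\tau$ non-constant'' into ``$\tau$ takes at least two values''.
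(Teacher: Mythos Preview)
Your proposal is correct and matches the paper's approach exactly: the paper simply states that the corollary follows by combining Proposition~\ref{p:positive-entropy} and Corollary~\ref{c:image-ca-strong-irr}, which is precisely what you do. Your explicit check that ``$\tau$ non-constant'' yields at least two configurations in $\tau(A^G)$ just spells out the one routine detail the paper leaves implicit.
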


\begin{remark}
If $\tau \colon A^G \to B^G$ is a non-trivial cellular automaton as in Corollary 
\ref{c:image-ca-positive-ent}, 
then the subshift $\tau(A^G) \subset B^G$ is not minimal. Indeed, if $x_0 \in A^G$ is a constant configuration, then the subshift $Y = \{\tau(x_0)\}$ satisfies
$\varnothing \subsetneqq Y \subsetneqq \tau(A^G)$.
\end{remark}

\section{Proof of the main result}
\label{s:proof-main-result}

 Theorem \ref{t:main-theorem} will be deduced from the following statement:  

\begin{theorem}
\label{t:pre-inj-si-source}
Let $G$ be an amenable group 
and let $\FF = (F_j)_{j \in J}$ be a F\o lner net for $G$.
Let $A$ and $B$ be two finite sets.
Suppose that $X \subset A^G$   and $Y \subset B^G$ are subshifts
with $X$ strongly irreducible. 
Let $\tau \colon X \to Y$ be a pre-injective cellular automaton.
Then one has $\ent_\FF(\tau(X))  = \ent_\FF(X)$.
 \end{theorem}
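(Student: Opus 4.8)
The plan is to prove the two inequalities $\ent_\FF(\tau(X)) \le \ent_\FF(X)$ and $\ent_\FF(X) \le \ent_\FF(\tau(X))$ separately. The first is immediate from Proposition \ref{p:ca-decrease-entropy} applied with $Z = X$, so the entire content lies in the reverse inequality, which is precisely where pre-injectivity is used. The overall strategy for the reverse inequality is a counting argument: bound $\vert X_{F_j}\vert$ from above by the number of image patterns on a slightly enlarged window, and then let $j$ run along the F\o lner net.

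First I would fix a memory set $M$ for $\tau$ and, enlarging it if necessary, a finite subset $\Delta \subset G$ which is symmetric, contains $M \cup \{1_G\}$, and is such that $X$ is $\Delta$-irreducible (this is possible by the remark following Definition \ref{def:delta-irred-subshift}). Fix once and for all a reference configuration $z_0 \in X$. Given an arbitrary finite subset $F \subset G$, I would apply Lemma \ref{l:strongly-implies-stab} to the two-element family consisting of $F$ and $G \setminus F^{+\Delta}$ — one checks that $F^{+\Delta}\cap(G\setminus F^{+\Delta}) = \varnothing$ and, using $\Delta = \Delta^{-1}$, that $(G\setminus F^{+\Delta})^{+\Delta}\cap F = \varnothing$, so the hypothesis \eqref{e:family-Delta-separated} holds — to obtain, for each pattern $q \in X_F$, a configuration $x_q \in X$ with $x_q\vert_F = q$ and $x_q\vert_{G \setminus F^{+\Delta}} = z_0\vert_{G \setminus F^{+\Delta}}$.

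The key step is then to show that the map $q \mapsto \tau(x_q)\vert_{F^{+\Delta^2}}$ is injective on $X_F$ (here $\Delta^2 = \Delta\Delta$). Indeed, if $q \neq q'$ then $x_q$ and $x_{q'}$ are distinct configurations of $X$ coinciding outside the finite set $F^{+\Delta}$, so pre-injectivity of $\tau$ forces $\tau(x_q) \neq \tau(x_{q'})$; on the other hand, by the memory-set property, $\tau(x_q)$ and $\tau(x_{q'})$ coincide outside $(F^{+\Delta})^{+M} = F^{+M\Delta} \subset F^{+\Delta^2}$, so they must already differ somewhere in $F^{+\Delta^2}$. This yields $\vert X_F\vert \le \vert \tau(X)_{F^{+\Delta^2}}\vert$ for every finite $F \subset G$. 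I regard this step as the heart of the matter: the point is that pre-injectivity, though a purely global condition, combines with the free local surgery permitted by strong irreducibility to produce a \emph{local} injective count; the only mildly delicate bookkeeping is checking the separation hypothesis of Lemma \ref{l:strongly-implies-stab} for the pair $\{F, G\setminus F^{+\Delta}\}$.

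Finally I would take $F = F_j$ and pass to the limit along the F\o lner net. Since $\tau(X)_{F_j^{+\Delta^2}} \subset \tau(X)_{F_j} \times B^{F_j^{+\Delta^2}\setminus F_j}$, the previous bound gives $\log \vert X_{F_j}\vert \le \log\vert\tau(X)_{F_j}\vert + \vert F_j^{+\Delta^2}\setminus F_j\vert \cdot \log\vert B\vert$; dividing by $\vert F_j\vert$, using $\vert F_j^{+\Delta^2}\setminus F_j\vert/\vert F_j\vert \to 0$ from \eqref{e:folner-net}, and taking the limsup yields $\ent_\FF(X) \le \ent_\FF(\tau(X))$. Together with the inequality from Proposition \ref{p:ca-decrease-entropy} this gives $\ent_\FF(\tau(X)) = \ent_\FF(X)$, as required. (Alternatively, one could run the reverse inequality by contradiction through the tiling estimate of Lemma \ref{l:ent-Z-smaller-ent-X}, but the direct count above seems cleaner.)
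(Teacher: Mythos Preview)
Your proof is correct and follows essentially the same route as the paper's own argument: both fix a symmetric $\Delta$ containing $1_G$ which is a memory set and witnesses $\Delta$-irreducibility, invoke Lemma~\ref{l:strongly-implies-stab} on the pair $\{F,\,G\setminus F^{+\Delta}\}$ to produce, for each pattern in $X_F$, a configuration agreeing with a fixed reference outside $F^{+\Delta}$, and then use pre-injectivity together with the memory-set locality to compare $\lvert X_F\rvert$ with $\lvert \tau(X)_{F^{+\Delta^2}}\rvert$. The only difference is packaging: the paper argues by contradiction (assuming $\ent_\FF(\tau(X))<\ent_\FF(X)$, it finds a single $j_0$ with $\lvert \tau(X)_{F_{j_0}^{+\Delta^2}}\rvert<\lvert X_{F_{j_0}}\rvert$ and then produces two configurations in $Z$ with the same image), whereas you establish the inequality $\lvert X_F\rvert\le\lvert\tau(X)_{F^{+\Delta^2}}\rvert$ directly for every finite $F$ and pass to the limsup.
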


\begin{proof}
We can assume $Y=\tau(X)$. We then have $\ent_\FF(Y)  \leq \ent_\FF(X)$ by Proposition \ref{p:ca-decrease-entropy}. Thus it suffices to show that $\ent_\FF(Y)  \geq \ent_\FF(X)$.
Suppose on the contrary that
\begin{equation}
\label{e:ent-Y-less-ent-X} 
\ent_\FF(Y) < \ent_\FF(X).
\end{equation}
 Let $\Delta$ be a finite subset of $G$ such that $X$ is $\Delta$-irreducible.
 After enlarging $\Delta$ if necessary,
 we can assume that $1_G \in \Delta$ and $\Delta = \Delta^{-1}$.
 We can also assume that $\Delta$ is a memory set for $\tau$. 
 Note that we have the inclusions $\Omega\subset\Omega^{+\Delta}\subset\Omega^{+\Delta^2}$ for every subset $\Omega\subset G$ since $1_G\in G$.
As  $Y_{F_j^{+\Delta^2}} \subset Y_{F_j} \times B^{F_j^{+ \Delta^2}\setminus F_j}$,
we have  
$$
\log \vert Y_{F_j^{+\Delta^2}} \vert \leq \log\vert Y_{F_j} \vert + 
\vert F_j^{+ \Delta^2} \setminus F_j \vert \cdot \log \vert B \vert
$$
for all $j \in J$,  and therefore
\begin{equation}
\label{e:limsup-Fj-Delta2}
\limsup_j \frac{\log \vert Y_{F_j^{+\Delta^2}} \vert }{\vert F_j \vert} 
\leq \limsup_j \frac{\log \vert Y_{F_j} \vert }{\vert F_j \vert} = \ent_\FF(Y),
 \end{equation}
since $\lim_j \vert F_j^{+ \Delta^2} \setminus F_j \vert/\vert F_j \vert = 0$ by \eqref{e:folner-net}. 
\par
From \eqref{e:limsup-Fj-Delta2} and \eqref{e:ent-Y-less-ent-X}, we deduce that there exists 
$j_0 \in J$ such that
\begin{equation}
\label{e:Y-Fj0-Delta2-less-X-j0}
\vert Y_{F_{j_0}^{+ \Delta^2}} \vert < \vert X_{F_{j_0}} \vert.
\end{equation}
 
Fix an arbitrary configuration $x_0 \in X$ and consider the finite subset $Z \subset X$ consisting of all configurations  $z \in X$ which coincide with $x_0$ outside of  $ F_{j_0}^{+\Delta}$.
We claim that  
\begin{equation}
\label{e:Z-f-X-F}
 X_{F_{j_0}} = Z_{F_{j_0}}.
\end{equation}
Indeed, let $x$ be an arbitrary configuration in $X$.   
As $X$ is $\Delta$-irreducible, it follows from Lemma \ref{l:strongly-implies-stab}, applied
by taking $I = \{1,2\}$, $\Omega_1 = F_{j_0}$ and $\Omega_2 = G \setminus  F_{j_0}^{+\Delta} $, 
that there exists a configuration $z \in X$ which coincides with $x$ on $F_{j_0}$ and with $x_0$ on $G \setminus  F_{j_0}^{+\Delta}$.
We then have $z \in Z$ and $x\vert_{F_{j_0}} = z\vert_{F_{j_0}}$.
This shows $X_{F_{j_0}} \subset Z_{F_{j_0}}$.
As $Z \subset X$, we also have $Z_{F_{j_0}} \subset X_{F_{j_0}}$
 and   \eqref{e:Z-f-X-F} follows.
 \par
 As the natural projection map $Z \to Z_{F_{j_0}}$ is surjective,
 we deduce from \eqref{e:Z-f-X-F} that
 \begin{equation}
 \label{e:card-Z-Fj0-Delta2-less}
 \vert X_{F_{j_0}} \vert \leq \vert Z \vert.
 \end{equation}
 Consider now an arbitrary configuration $z \in Z$. As $z$ and $x_0$ coincide outside of 
 $F_{j_0}^{+ \Delta}$ and $\Delta$ is a memory set for $\tau$,
 we know that $\tau(z)$ and $\tau(x_0)$ must coincide outside of $(F_{j_0}^{+ \Delta})^{+\Delta} = F_{j_0}^{+\Delta^2}$. Since $\tau(Z) \subset \tau(X) = Y$, this implies
\begin{equation}
\label{e:card-tau-Z-less}
\vert \tau(Z) \vert \leq \vert Y_{F_{j_0}^{+ \Delta^2}} \vert.
\end{equation}
From inequalities \eqref{e:Y-Fj0-Delta2-less-X-j0}, \eqref{e:card-Z-Fj0-Delta2-less}, and 
\eqref{e:card-tau-Z-less},
we deduce that $\vert \tau(Z) \vert < \vert Z \vert$.
This implies that  there exists two distinct configurations $z_1, z_2 \in Z$ such that 
$\tau(z_1) = \tau(z_2)$. As all configurations in $Z$ coincide outside of the finite set $F_{j_0}^{+\Delta}$, this shows that $\tau$ is not pre-injective. 
 \end{proof}

\begin{corollary}
\label{c:pre-inj-sis-implies surj}
Let $G$ be an amenable group 
and let $\FF = (F_j)_{j \in J}$ be a F\o lner net for $G$.
Let $A$ and $B$ be two finite sets.
Suppose that $X \subset A^G$ and $Y \subset B^G$ are strongly irreducible subshifts with
$\ent_\FF(X) = \ent_\FF(Y)$.
Then every pre-injective cellular automaton $\tau \colon X \to Y$ is surjective.
\end{corollary}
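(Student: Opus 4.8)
The plan is to combine Theorem \ref{t:pre-inj-si-source} with Proposition \ref{p:entropy-increasing}, exactly in the way that Corollary \ref{c:injective-ca-surjective} was obtained from Corollary \ref{c:entropy-invariant} and Proposition \ref{p:entropy-increasing}, but replacing the entropy-preservation statement for bijective cellular automata by the stronger entropy-preservation statement for pre-injective cellular automata whose source is strongly irreducible.

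First I would record that $\tau(X)$ is a subshift of $B^G$ contained in $Y$; this was observed in the subsection on cellular automata, using compactness of $X$ and continuity and $G$-equivariance of $\tau$. Next, since $X$ is strongly irreducible and $\tau \colon X \to Y$ is pre-injective, Theorem \ref{t:pre-inj-si-source} (applied with the given F\o lner net $\FF$) gives $\ent_\FF(\tau(X)) = \ent_\FF(X)$. Combining this with the hypothesis $\ent_\FF(X) = \ent_\FF(Y)$ yields
\[
\ent_\FF(\tau(X)) = \ent_\FF(Y).
\]

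Now I would invoke Proposition \ref{p:entropy-increasing}, which applies because $Y$ is strongly irreducible: if the subshift $\tau(X) \subset Y$ were strictly contained in $Y$, then we would have $\ent_\FF(\tau(X)) < \ent_\FF(Y)$, contradicting the displayed equality. Hence $\tau(X) = Y$, i.e.\ $\tau$ is surjective.

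There is essentially no serious obstacle here: the whole content has already been packaged into Theorem \ref{t:pre-inj-si-source} and Proposition \ref{p:entropy-increasing}. The only point deserving a word of care is that one must use strong irreducibility of $Y$ (to apply Proposition \ref{p:entropy-increasing} to the proper-subshift alternative) and strong irreducibility of $X$ (to apply Theorem \ref{t:pre-inj-si-source}); pre-injectivity alone is too weak to let Corollary \ref{c:entropy-invariant} do the job, which is precisely why Theorem \ref{t:pre-inj-si-source} is needed in place of it.
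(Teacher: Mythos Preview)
Your proof is correct and follows exactly the same approach as the paper: apply Theorem \ref{t:pre-inj-si-source} (using strong irreducibility of $X$) to get $\ent_\FF(\tau(X)) = \ent_\FF(X) = \ent_\FF(Y)$, and then invoke Proposition \ref{p:entropy-increasing} (using strong irreducibility of $Y$) to conclude $\tau(X) = Y$.
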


\begin{proof}
If $\tau \colon X \to Y$ is a pre-injective cellular automaton, then the subshift $\tau(X) \subset B^G$ must satisfy
$\ent_\FF(\tau(X)) = \ent_\FF(X)$ by Theorem \ref{t:pre-inj-si-source}.  
As $\tau(X) \subset Y$ and $\ent_\FF(X) = \ent_\FF(Y)$, this implies
$\tau(X) = Y$ by Proposition \ref{p:entropy-increasing}.
\end{proof}

\begin{proof}[Proof of Theorem \ref{t:main-theorem}]
It suffices to apply Corollary \ref{c:pre-inj-sis-implies surj} by taking $X = Y$.
\end{proof}


\section{Strongly irreducible subshifts over $\Z$}
\label{sec:strongly-over-Z}

In this section we present the proof of Proposition \ref{p;equivalences-strong-irreducibility} which gives a characterization of strongly irreducible subshifts over $\Z$ in terms of their associated languages.
\par
We first recall some basic definitions.
Let $A$ be a finite set.
We denote by  $A^*$ the free monoid based on $A$.
Thus, $A^*$ is the set   consisting
  of all finite words  with leters in  $A$ equipped with the multiplicative law given by  the concatenation product of words 
  (the identity element in $A^*$ is the empty word).
  \par 
Consider  now a subshift $X \subset A^\Z$.
 The \emph{language} of $X$ is the subset $L(X) \subset A^*$  consisting of all words $w \in A^*$
 which can be written in the form $w = x(1)x(2) \cdots x(n)$
for some $x \in X$ and   $n \geq 0$.
It is well known that a subshift $X \subset A^\Z$ is
  topologically mixing if and only if the following conditions is satisfied
  \begin{enumerate}[{\rm (TM)}]
  \item{for all $u,v \in L(X)$ there exists
an  integer $  n_0(u,v) \geq 0$
 such that, for every integer  $n \geq n_0(u,v)$, there exists a word $w \in A^*$ of length $n$ satisfying $uwv \in L(X)$}
 \end{enumerate} (see for instance \cite[Example 6.3.3, Definition 4.5.9 and Exercise 6.3.5)]{lind-marcus}).
\par
We are now in position to prove the characterization of strongly irreducible subshifts
given in Proposition \ref{p;equivalences-strong-irreducibility}.

\begin{proof}[Proof of Proposition \ref{p;equivalences-strong-irreducibility}]
Suppose (a). 
Let $\Delta$ be a finite subset of $\Z$ such that $X$ is $\Delta$-irreducible. 
Choose an integer  $N_0 \geq 0$ such that $\Delta \subset [-N_0,N_0]$. Let $u =a_1a_2 \cdots a_n,v = b_1b_2 \cdots b_m \in L(X)$ and $N \geq N_0$. 
Then the sets $\Omega_1 = \{-n-N+1, -n-N+2, \ldots, -N\}$ and $\Omega_2 = \{1,2,\ldots,m\}$  
satisfy
 $\Omega_1^{+\Delta} \cap \Omega_2 = \varnothing$. Since $u,v \in L(X)$, we can find configurations $x_1, x_2 \in X$ such that $x_1(-N-n+i) = a_i$ and $x_2(j) = b_j$ for all $i\in \{1,2,\ldots,n\}$ and $j \in \{1,2,\ldots,m\}$. 
By $\Delta$-irreducibility of $X$, there exists 
$x \in X$ such that $x\vert_{\Omega_1} =  x_1\vert_{\Omega_1}$ and $x\vert_{\Omega_2} =  x_2\vert_{\Omega_2}$. 
Then the word $w =   x(-N+1)x(- N + 2) \cdots x(0)$ has length $N$ and satisfies 
 $uwv \in L(X)$. This shows that (a) implies (b).
\par
Conversely, suppose (b).  Let us show that $X$ is $\Delta$-irreducible for $\Delta = \{-N_0, -N_0+1, \ldots, N_0\}$.
Let $\Omega_1$ and $\Omega_2$ be two finite subsets of $\Z$ such that 
\begin{equation}
\label{e;intersection-vide}
\Omega_1^{+\Delta} \cap \Omega_2 = \varnothing
\end{equation} 
and let $x_1, x_2 \in X$. We want to show that there exists a
configuration $x \in X$ such that 
\begin{equation}
\label{e;restrictions}
x\vert_{\Omega_1} = x_1\vert_{\Omega_1} \mbox{ and } x\vert_{\Omega_2} = x_2\vert_{\Omega_2}.
\end{equation} 

First observe that from \eqref{e;intersection-vide} we deduce that
\begin{equation*}
\label{e;intersection-simple-vide}
\Omega_1 \cap \Omega_2 = \varnothing
\end{equation*}
since $0 \in \Delta$. 
Moreover, the condition $\Omega_1^{+\Delta} \cap \Omega_2 = \varnothing$ implies $\Omega_2^{+\Delta} \cap \Omega_1 = \varnothing$, 
since $\Delta = -\Delta$. 
Thus, after possibly exchanging $\Omega_1$ and $\Omega_2$,
we can assume $\min \Omega_1 < \min \Omega_2$.
On the other hand, after enlarging $\Omega_2$ if necessary,  we can also assume 
$\max \Omega_1 < \max \Omega_2$.
\par 
Now observe that condition (b) implies, by an immediate induction on $s$,  the following:
\begin{enumerate}
\item[{\rm (b')}] 
there exists an  integer $N_0 \geq 0$ such that, for any sequence of words $u_1,u_2, \ldots, u_s \in L(X)$, $s \geq 1$, and any sequence of integers $N_1,N_2, \ldots, N_{s-1} \geq N_0$, there exist words 
$w_1, w_2, \ldots , w_{s - 1} \in A^*$, with $w_i$ of length $N_i$ for $1 \leq i \leq s - 1$, satisfying 
\begin{equation}
\label{e;uwvzu}
u_1w_1 u_2w_2\cdots u_{s-1}w_{s-1} u_s \in L(X).
\end{equation}
\end{enumerate}
 \par
 Let us introduce the following equivalence relation
$\sim_1$ on $\Omega_1$ (resp. $\sim_2$ on $\Omega_2$). Given $\omega_1, \omega_1' \in \Omega_1$ (resp. $\omega_2, \omega_2' \in \Omega_2$) we write $\omega_1 \sim_1 \omega_1'$ (resp. $\omega_2 \sim_2\omega_2'$) if there is no element of $\Omega_2$
(resp. of $ \Omega_1$) between $\omega_1$ and $\omega_1'$ (resp. between $\omega_2$ and $\omega_2'$). 
Note that the conditions $\min \Omega_1 < \min \Omega_2$ and 
$\max \Omega_1 < \max \Omega_2$ imply that $\sim_1$ and $\sim_2$ have the same number of equivalence classes, say $s$.
Let
$$
\Omega_1 = \bigcup_{i=1}^s \Omega_{1,i} \text{\  and \ } \Omega_2 = \bigcup_{i=1}^s \Omega_{2,i}
$$
be the corresponding partitions of $\Omega_1$ and $\Omega_2$ into equivalence classes.
Let us set $m_i = \min \Omega_{1,i}$, $n_i = \max \Omega_{1,i}$, 
$p_i = \min \Omega_{2,i}$ and $q_i = \max \Omega_{2,i}$ for $i=1,2,\ldots,s$.  
We then have, after renumbering the equivalence classes if necessary,  
 $$
m_1 \leq n_1 < p_1 \leq q_1 < m_2 \leq n_2 < p_2 \leq q_2  < \dots <
m_s \leq n_s < p_s \leq q_s.
$$
It follows  from \eqref{e;intersection-vide} that we have $N_i = p_i-n_i > N_0$ for all $i = 1,2,\dots,s$ and   $M_i = m_i-q_{i-1} > N_0$ for all $i=2,3,\ldots, s$.
\par
Consider the words $u_i  ,v_i \in L(X)$ defined by
$$
u_i = x_1(m_i)x_1(m_i + 1) \cdots x_1(n_i)
\quad \text{ and } \quad
v_i = x_2(p_i)x_2(p_i + 1) \cdots x_2(q_i)
$$
for $i = 1,2,\dots,s$.
\par By applying (b') to the sequence of words $u_1,v_1,u_2,v_2, \ldots, u_s,v_s \in L(X)$ and to the sequence
of integers $N_1,M_1,N_2, M_2, \ldots,  N_{s-1},  M_{s-1}, N_s$
we deduce that we can find
words $w_1, z_1,w_2,z_2, \ldots, w_{s-1},z_{s-1},w_s \in A^*$ with
$w_i$ of length $N_i$, for $i=1,2, \ldots, s$, and   $z_i$ of length $M_i$, for $i=1,2,\ldots, s-1$,
such that the word
$$
w = u_1w_1v_1z_1 u_2w_2v_2z_2 \cdots u_{s-1}w_{s-1}v_{s-1}z_{s-1} u_sw_sv_s
$$
belongs to $L(X)$.
Writing $w = a_1a_2 \cdots a_\ell$, where $\ell = q_s - m_1+1$ and $a_1,a_2, \ldots, a_\ell \in A$, this implies that we can find $x \in X$ satisfying 
$x(m_1 + k - 1) = a_k$ for $k = 1,2,\dots,\ell$.
Then $x$ satisfies \eqref{e;restrictions}.
This shows that (b) implies  (a).
\end{proof}

Let $A$ be a finite set. 
Given a finite $A$-labeled graph $\GG$, the set $X_\GG \subset A^\Z$, consisting of all configurations in $A^\Z$ which can be represented by some bi-infinite path in $\GG$, is a subshift of $A^\Z$.  A subshift $X \subset A^\Z$ is said to be \emph{sofic} if there exists a finite $A$-labeled graph $\GG$ such that $X = X_\GG$ (see, e.g., \cite[Chapter 3]{lind-marcus}). We are now in position to prove Corollary \ref{c:sofic-strongly-irr}:

\begin{proof}[Proof of  Corollary \ref{c:sofic-strongly-irr}]
The necessity follows from Proposition \ref{p:strongly-irred-implies-top-mixing}.
\par
Conversely, let $\GG$ be a finite  $A$-labeled graph such that $X = X_\GG$ and denote by $Q$ its vertex set. As every topologically mixing subshift over $\Z$ is irreducible, we may suppose that $\GG$ is strongly connected, that is, for all $q,q' \in Q$ there exists a path $\pi$ in $\GG$ which connects $q$ to $q'$ (see, e.g. \cite[Lemma 3.3.10]{lind-marcus}).
\par
It can be shown (see, e.g., \cite[Proposition 3.3.2, Proposition 3.3.9 and Proposition 3.3.16]{lind-marcus}) that $\GG$ can be chosen such that, in addition, there exists a \emph{synchronized word} for $\GG$, that is, a word $u_0 \in L(X)$ for which there exists a vertex $q_0 = q_0(u_0) \in Q$ such that all paths representing $u_0$ terminate at $q_0$.
\par
  By the topological mixing property, we can find an integer $n_0 = n_0(u_0,u_0)$ such that for every 
$n \geq n_0$, there exists a word $w \in A^*$ of length $n$ satisfying $u_0wu_0 \in L(X)$. Now, every path $\pi$ in $\GG$ representing the word $u_0wu_0$ factorizes as 
$\pi = \pi_1 \pi' \pi_2$, where the paths $\pi_1,\pi_2$ (resp. $\pi'$) represent $u_0$
(resp. $w$) and terminate (resp. starts) at $q_0$, 
so that  the path $\varphi = \pi' \pi_2$ is a closed path based at the vertex $q_0$. 
Thus, setting $L_0 = n_0 + \ell_0$, where $\ell_0$ is the length of $u_0$, we deduce that, 
 for every $N \geq L_0$, there exists a closed path $\varphi$ in $\GG$ of length $N$ based at $q_0$.
\par
Let $D$ denote the diameter of $\GG$, that is, the length of the longest geodesic path  in $\GG$, and set $N_0 = L_0 + 2D$.
It is then clear that, for all $q, q' \in Q$ and every $N \geq N_0$ there exists a path
$\psi$ in $\GG$ of length $N$ starting at $q$, passing through $q_0$, and terminating at $q'$.
Let us show that $N_0$ satisfies condition (b) in 
Proposition \ref{p;equivalences-strong-irreducibility}.
Let $u,v \in L(X)$ and $N \geq N_0$. Choose paths $\pi_u, \pi_v$ in $\GG$ representing
$u$ and $v$ respectively. Let $q$ (resp. $q'$) denote the terminal vertex of $\pi_u$ (resp. the starting vertex of $\pi_v$).
Then we can find a path $\psi$ in $\GG$ of length $N$ connecting $q$ to $q'$.
It follows that the word $w$ represented by $\psi$ has length $N$ and satisfies $uwv \in L(X)$. 
By applying  Proposition \ref{p;equivalences-strong-irreducibility}, we deduce  that the subshift $X$ is strongly irreducible.
\end{proof}

As every subshift of finite type $X \subset A^\Z$ is sofic 
(see, e.g., \cite[Theorem 3.1.5]{lind-marcus}), we deduce the following:

\begin{corollary}
\label{c:dernier}
Let $A$ be a finite set and let $X \subset A^\Z$ be a subshift of finite type. 
Then $X$ is strongly irreducible if and only if it is topologically mixing.
\qed
\end{corollary}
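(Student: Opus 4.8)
The plan is to obtain Corollary \ref{c:dernier} as an immediate consequence of Corollary \ref{c:sofic-strongly-irr} together with the classical fact that every subshift of finite type over $\Z$ is sofic. So the first step is simply to invoke that fact: if $X \subset A^\Z$ is of finite type, then there is a finite $A$-labeled graph $\GG$ with $X = X_\GG$, i.e. $X$ is sofic (this is \cite[Theorem 3.1.5]{lind-marcus}).

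Once $X$ is known to be sofic, the second and final step is to apply Corollary \ref{c:sofic-strongly-irr} verbatim: a sofic subshift over $\Z$ is strongly irreducible if and only if it is topologically mixing. Chaining these two implications gives the desired equivalence for $X$. No separate argument for either direction is needed, since the necessity (strongly irreducible $\Rightarrow$ topologically mixing) is already contained in Proposition \ref{p:strongly-irred-implies-top-mixing} and the sufficiency is the substantive content of Corollary \ref{c:sofic-strongly-irr}.

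There is essentially no obstacle here: all the work has already been done. If I wanted to spell out where the real difficulty lies, it is hidden in Corollary \ref{c:sofic-strongly-irr}, whose proof exploits the existence of a synchronized word in a suitable presenting graph of the sofic (hence of the SFT) subshift, and in the cited reduction of an SFT to a sofic presentation. For the statement of Corollary \ref{c:dernier} itself, the proof is the one-line combination just described.

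\begin{proof}[Proof of Corollary \ref{c:dernier}]
Every subshift of finite type $X \subset A^\Z$ is sofic (see, e.g., \cite[Theorem 3.1.5]{lind-marcus}). The conclusion therefore follows immediately from Corollary \ref{c:sofic-strongly-irr}.
\end{proof}
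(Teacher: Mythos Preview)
Your proof is correct and is exactly the argument the paper uses: the corollary is deduced immediately from Corollary~\ref{c:sofic-strongly-irr} together with the fact that every subshift of finite type over $\Z$ is sofic (\cite[Theorem~3.1.5]{lind-marcus}).
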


\begin{remark}
In \cite[Proposition 3.39(2)]{kurka} it is shown that a Markov subshift (that is,
a subshift of finite type over $\Z$ defined by a set of forbidden words of length at most
two) is topologically mixing if and only if it satisfies condition (b) in Proposition \ref{p;equivalences-strong-irreducibility}. This result is covered by Corollary
\ref{c:dernier} since from this corollary we deduce that condition (b) in Proposition \ref{p;equivalences-strong-irreducibility} and condition (TM) above are
equivalent for subshifts of finite type over $\Z$.
\end{remark}

\bibliographystyle{siam}
\bibliography{myhill}

\end{document}